\documentclass[a4paper,11pt]{amsart}
\usepackage{amssymb}
\usepackage{pifont} 
\usepackage{bbding}
\usepackage{wasysym}
\usepackage{ifthen}
\usepackage{cite}
 \usepackage[dvips]{graphicx}
\nonstopmode \numberwithin{equation}{section}
\usepackage{amssymb}
\usepackage{ifthen}
\usepackage{graphicx}
\usepackage{amsmath}
\usepackage[T1]{fontenc} 
\usepackage[utf8]{inputenc}
\usepackage[usenames,dvipsnames]{color}
\usepackage{color}
\usepackage[english]{babel}
\usepackage{fancyhdr}
\usepackage{fancybox}
\usepackage{tikz}

\nonstopmode \numberwithin{equation}{section}
\theoremstyle{plain}

\newtheorem{conj}{Conjecture}

\theoremstyle{definition}
\newtheorem{defi}{Definition}[section]

\newtheorem{thm}{Theorem}[section]
\newtheorem{prob}{Problem}[section]
\newtheorem{cor}{Corollary}[section]

\newtheorem{prop}{Proposition}[section]
\newtheorem{rem}{Remark}[section]
\newtheorem{lem}{Lemma}[section]

\newcounter{minutes}
\divide\time by 60
\newcounter{hours}
\multiply\time by 60
\addtocounter{minutes}{-\time}

\newcounter {own}
\def\theown {\thesection       .\arabic{own}}

\newenvironment{pf}[1][]{%
 \vskip 3mm
 \noindent
 \ifthenelse{\equal{#1}{}}%
  {{\slshape Proof. }}%
  {{\slshape #1.} }%
 }%
{\qed\bigskip}

\newcounter{alphabet}

\def\be{\begin{equation}}
\def\ee{\end{equation}}

\newcommand{\bee}{\begin{enumerate}}
\newcommand{\eee}{\end{enumerate}}

\newcommand{\blem}{\begin{lem}}
\newcommand{\elem}{\end{lem}}
\newcommand{\bthm}{\begin{thm}}
\newcommand{\ethm}{\end{thm}}
\newcommand{\bcor}{\begin{cor}}
\newcommand{\ecor}{\end{cor}}
\newcommand{\beg}{\begin{examp}}
\newcommand{\eeg}{\end{examp}}
\newcommand{\begs}{\begin{examples}}
\newcommand{\eegs}{\end{examples}}

\newcommand{\bdefn}{\begin{defn}}
\newcommand{\edefn}{\end{defn}}

\newcommand{\bprob}{\begin{prob}}
\newcommand{\eprob}{\end{prob}}
\newcommand{\bei}{\begin{itemize}}
\newcommand{\eei}{\end{itemize}}

\newcommand{\bcon}{\begin{conj}}
\newcommand{\econ}{\end{conj}}
\newcommand{\bcons}{\begin{conjs}}
\newcommand{\econs}{\end{conjs}}
\newcommand{\bprop}{\begin{prop}}
\newcommand{\eprop}{\end{prop}}
\newcommand{\br}{\begin{rem}}
\newcommand{\er}{\end{rem}}
\newcommand{\brs}{\begin{rems}}
\newcommand{\ers}{\end{rems}}
\newcommand{\bo}{\begin{obser}}
\newcommand{\eo}{\end{obser}}
\newcommand{\bos}{\begin{obsers}}
\newcommand{\eos}{\end{obsers}}
\newcommand{\bpf}{\begin{pf}}
\newcommand{\epf}{\end{pf}}
\newcommand{\ba}{\begin{array}}
\newcommand{\ea}{\end{array}}
\newcommand{\beq}{\begin{eqnarray}}
\newcommand{\beqq}{\begin{eqnarray*}}
\newcommand{\eeq}{\end{eqnarray}}
\newcommand{\eeqq}{\end{eqnarray*}}

\begin{document}

\title{Sharp inequalities for Logarithmic Coefficients for Certain Classes of Univalent Functions}

\author{Sanju Mandal}
\address{Sanju Mandal, Department of Mathematics, Jadavpur University, Kolkata-700032, West Bengal, India.}
\email{sanju.math.rs@gmail.com}

\author{Molla Basir Ahamed}
\address{Molla Basir Ahamed, Department of Mathematics, Jadavpur University, Kolkata-700032, West Bengal, India.}
\email{mbahamed.math@jadavpuruniversity.in}

\author{Paweł Zaprawa}
\address{Paweł Zaprawa, Department of Mathematics, Lublin University of Technology, Nadbystrzycka 38D, Lublin, 20-618, Poland.}
\email{p.zaprawa@pollub.pl}

\subjclass[{AMS} Subject Classification:]{Primary 30C50, Secondary 30C45}
\keywords{Univalent functions, Toeplitz Determinant, Logarithmic coefficients, Schwarz functions, Inverse functions}

\def\thefootnote{}
\footnotetext{ {\tiny File:~\jobname.tex,
printed: \number\year-\number\month-\number\day,
          \thehours.\ifnum\theminutes<10{0}\fi\theminutes }
} \makeatletter\def\thefootnote{\@arabic\c@footnote}\makeatother

\begin{abstract} 
Let $\mathcal{S}$ denote the class of functions $f(z) = z + \sum_{n=2}^{\infty} a_n z^n$ that are analytic and univalent in the open unit disk $\mathbb{D} = \{z \in \mathbb{C} : |z| < 1\}$. In this paper, we determine the sharp bounds of the Toeplitz determinants whose entries are the logarithmic coefficients of $f \in \mathcal{S}$. Furthermore, we investigate the corresponding Toeplitz determinants for the logarithmic coefficients of the associated inverse functions. These sharp bounds are established for functions belonging to several well-known subclasses of $\mathcal{S}$, namely, the classes $\mathcal{S}^*(\alpha)$ of starlike functions of order $\alpha$, $\mathcal{C}(\alpha)$ of convex functions of order $\alpha$, $\mathcal{S}^*_{\alpha}$ and $\mathcal{C}_{\alpha}$ of strongly starlike and strongly convex functions of order $\alpha$, and $\mathcal{R}(\alpha)$ of functions with bounded turning. As special cases of our main results, we obtain the exact bounds of these determinants for the classical classes of starlike, convex, and bounded turning functions.
\end{abstract}

\maketitle
\pagestyle{myheadings}
\markboth{S. Mandal, M. B. Ahamed, and P. Zaprawa}{On logarithmic coefficients and inverse functions}

\section{\bf Introduction}
The coefficient problem constitutes a central theme in geometric function theory, with a primary focus on establishing sharp bounds for various functional inequalities. Let $\mathcal{H}$ denote the class of functions $f$ that are holomorphic (analytic) in the open unit disk $\mathbb{D} = \{z \in \mathbb{C} : |z| < 1\}$. Let $\mathcal{A}$ be the subclass of $\mathcal{H}$ consisting of functions normalized by the conditions $f(0) = 0$ and $f^{\prime}(0) = 1$. Thus, any function $f \in \mathcal{A}$ possesses a Taylor-Maclaurin series expansion of the form\begin{equation}\label{eq-1.1}f(z) = z + \sum_{n=2}^{\infty} a_n z^n, \quad z \in \mathbb{D}.\end{equation}We denote by $\mathcal{S}$ the subclass of $\mathcal{A}$ consisting of functions that are univalent in $\mathbb{D}$. For a comprehensive treatment and classical results on univalent functions, we refer the reader to the monographs of Duren \cite{Duren-1983-NY} and Goodman \cite{Goodman-1983}.

Univalent functions play a pivotal role in geometric analysis, finding extensive applications in conformal mappings, potential theory, and the geometry of Riemann surfaces. The structural study of these functions and their respective coefficients not only deepens the theoretical framework of complex analysis but also provides essential mathematical tools utilized across various disciplines in science and engineering. \vspace{2mm}

While numerous investigations in the literature have focused on establishing sharp bounds for Hankel determinants involving the logarithmic coefficients of various univalent function classes, the estimation of the corresponding Toeplitz determinants remains an open and fertile area of research. Driven by this motivation, the primary objective of the present paper is to determine the sharp bounds for Toeplitz determinants whose entries are the logarithmic coefficients of functions $f$ belonging to the subclasses $\mathcal{S}^*(\alpha)$, $\mathcal{C}(\alpha)$, $\mathcal{S}^*_{\alpha}$, $\mathcal{C}_{\alpha}$, and $\mathcal{R}(\alpha)$, respectively.

To contextualize our findings within recent developments and facilitate the presentation of our main results, we first recall the method of differential subordination. This powerful geometric technique serves as an indispensable tool in function theory, offering a highly effective and precise framework for resolving complex coefficient and structural problems.\vspace{2mm}
\begin{defi}
	Let $f$ and $g$ be analytic functions in the open unit disk $\mathbb{D} = \{z \in \mathbb{C} : |z| < 1\}$. The function $f$ is said to be \textit{subordinate} to $g$, denoted by $f \prec g$ (or $f(z) \prec g(z)$), if there exists a Schwarz function $\omega$, analytic in $\mathbb{D}$ with $\omega(0) = 0$ and $|\omega(z)| < 1$ ($z \in \mathbb{D}$), such that 
	\begin{equation*}
		f(z) = g(\omega(z)), \quad z \in \mathbb{D}.
	\end{equation*}
	In particular, if the function $g$ is univalent in $\mathbb{D}$, then the subordination $f \prec g$ is equivalent to the simultaneous conditions:
	\begin{enumerate}
		\item[(i)] $f(0) = g(0)$,
		\item[(ii)] $f(\mathbb{D}) \subseteq g(\mathbb{D})$.
	\end{enumerate}
\end{defi}
\begin{defi}
A domain $\Omega\subseteq \mathbb{C}$ is said to be starlike with respect to a point $z_0\in\Omega$ if the line segment joining $z_0$ to any point in $\Omega$ lies entirely in $\Omega$. If $z_0$ is the origin, then we say that $\Omega$ is a starlike domain. A function $f\in\mathcal{A}$ is said to be starlike if $f(\mathbb{D})$ is a starlike domain. We denote by $\mathcal{S}^{*}$ the class of starlike functions $f$ in $\mathcal{S}$. It is well known that a function $f\in\mathcal{A}$ is in $\mathcal{S}^{*}$ if and only if
\begin{align}\label{eq-1.2}
	{\rm Re}\left(\frac{zf^{\prime}(z)}{f(z)}\right)>0 \hspace{0.5cm} \mbox{for}\;\; z\in\mathbb{D}.
\end{align}
\end{defi}

\begin{defi}
A domain $\Omega\subseteq \mathbb{C}$ is said to be convex if the line segment joining any two points of $\Omega$ lies entirely in $\Omega$. A function $f\in\mathcal{A}$ is said to be convex if $f(\mathbb{D})$ is a convex domain. We denote by $\mathcal{C}$ the class of convex functions in $\mathcal{S}$. A function $f\in\mathcal{A}$ is in $\mathcal{C}$ if and only if
\begin{align}\label{eq-1.3}
	{\rm Re}\left(1+ \frac{zf^{\prime\prime}(z)} {f^{\prime}(z)}\right)>0 \hspace{0.5cm}\mbox{for}\;\; z\in\mathbb{D}.
\end{align}
\end{defi}

\begin{defi}
Given $\alpha\in[0,1)$, a function $f\in\mathcal{A}$ of the form \eqref{eq-1.1} is called starlike functions of order $\alpha$, if
\begin{align}\label{eq-1.4}
	{\rm Re}\left(\frac{zf^{\prime}(z)}{f(z)}\right)>\alpha \hspace{0.5cm} \mbox{for}\;\; z\in\mathbb{D}.
\end{align}
The class of all such functions is denoted by $\mathcal{S}^*(\alpha)$. For $\alpha=0$, this class reduces to the well-known class $\mathcal{S}^*$, konwn as the class of starlike functions.
\end{defi}

\begin{defi}
Given $\alpha\in[0,1)$, a function $f\in\mathcal{A}$ of the form \eqref{eq-1.1} is called convex functions of order $\alpha$, if
\begin{align}\label{eq-1.5}
	{\rm Re}\left(1+ \frac{zf^{\prime\prime}(z)} {f^{\prime}(z)}\right)>\alpha\hspace{0.5cm}\mbox{for}\;\; z\in\mathbb{D}.
\end{align}
The class of all such functions is denoted by $\mathcal{C}(\alpha)$. For $\alpha=0$, this class reduces to the well-known class $\mathcal{C}$, konwn as the class of convex functions.
\end{defi}

\begin{defi}
A function $f\in\mathcal{A}$ of the form \eqref{eq-1.1} is said to be strongly starlike of order $\alpha$, $0<\alpha\leq 1$, if 
\begin{align}\label{eq-1.6}
	\vline\;\arg\left(\frac{zf^{\prime}(z)}{f(z)}\right)\vline<\frac{\pi\alpha}{2}\hspace{0.5cm}\mbox{for}\;\; z\in\mathbb{D},\;\arg 1:=0.
\end{align}
The class of all such functions is denoted by $\mathcal{S}^*_{\alpha}$.
\end{defi}
\begin{defi}
A function $f\in\mathcal{A}$ of the form \eqref{eq-1.1} is said to be strongly convex of order $\alpha$, $0<\alpha\leq 1$, if 
\begin{align}\label{eq-1.7}
	\vline\;\arg\left(1+\frac{zf^{\prime\prime}(z)}{f^{\prime}(z)}\right)\vline<\frac{\pi\alpha}{2}\hspace{0.5cm}\mbox{for}\;\; z\in\mathbb{D}, \;\arg 1:=0.
\end{align}	
The class of all such functions is denoted by $\mathcal{C}_{\alpha}$.
\end{defi}
It is worth noting that the classes $\mathcal{S}^*_{1}:=\mathcal{S}^*$ and $\mathcal{C}_{1}:=\mathcal{C}$ correspond to the well-known classes of starlike and convex functions, respectively.\vspace{2mm}

Now, we recall a definition of the class \textit{bounded turning functions of order $\alpha$}, which is defined as follows:
\begin{defi}
Let $0\leq\alpha<1$. If $f\in\mathcal{A}$, then $f\in\mathcal{R}(\alpha)$, if and only, if
\begin{align}\label{eq-1.8}
	\mbox{Re} f^{\prime}(z) >\alpha\hspace{0.5cm}\mbox{for}\;\; z\in\mathbb{D}.
\end{align}
It is obvious that $\mathcal{R}(0)=\mathcal{R}$. Elements of the class $\mathcal{R}$ are called \textit{functions of bounded turning}.
\end{defi}\vspace{2mm}

\subsection{Logarithmic coefficients for the class $\mathcal{S}$:}
The \textit{logarithmic coefficients} $\gamma_n$ of $f\in\mathcal{S}$ are defined by
\begin{align}\label{eq-1.9}
	F_{f}(z):=\log\dfrac{f(z)}{z}=2\sum_{n=1}^{\infty}\gamma_{n}(f)z^n, \;\; z\in\mathbb{D},\;\;\log 1:=0.
\end{align}
The coefficients $\gamma_{n} := \gamma_{n}(f)$, $n \in \mathbb{N}$, are called the \textit{logarithmic coefficients} of $f$. Although these coefficients constitute a fundamental structural tool in the theory of univalent functions, establishing inequalities involving them that are sharp remains a challenging task, and exact estimates are known only for a restricted number of function classes. A prime testament to their significance is their central role in Milin's conjecture \cite{Milin-1977-ET} (see also \cite[p. 155]{Duren-1983-NY}), which asserts that for $f \in \mathcal{S}$ and $n \geq 2$,
\begin{equation*}
	\sum_{m=1}^{n}\sum_{k=1}^{m}\left(k|\gamma_{k}|^2 -\frac{1}{k}\right) \leq 0.
\end{equation*}
Equality holds if and only if $f$ is a rotation of the classical Koebe function. The verification of Milin's conjecture by de Branges \cite{Branges-AM-1985} famously yielded the complete proof of the Bieberbach conjecture. 

Beyond this significant result, logarithmic coefficients continue to attract intense scrutiny because obtaining sharp bounds for $|\gamma_n|$ when $n \geq 3$ for the full class $\mathcal{S}$ remains a notorious open problem; explicit sharp bounds are known classically only for $\gamma_{1}$ and $\gamma_{2}$. By differentiating the defining relation \eqref{eq-1.9} and substituting the series expansion \eqref{eq-1.1}, a straightforward calculation yields
\begin{align}\label{eq-1.10}
	\begin{cases}
		\gamma_{1}=\dfrac{1}{2}a_{2},\vspace{1.5mm}\\ \gamma_{2}=\dfrac{1}{2} \left(a_{3} -\dfrac{1}{2}a^2_{2}\right), \vspace{1.5mm}\\ \gamma_{3} =\dfrac{1}{2}\left(a_{4}- a_{2}a_{3} +\dfrac{1}{3}a^3_{2}\right).
	\end{cases}
\end{align}
If $f\in\mathcal{S}$, it is easy to see that  $|\gamma_1|\leq 1$, because $|a_2|\leq 2$. Using the Fekete–Szegö inequality \cite[Theorem 3.8]{Duren-1983-NY} for functions in $\mathcal{S}$ in \eqref{eq-1.3}, we obtain the sharp estimate
\begin{align*}
	|\gamma_{2}|\leq \frac{1}{2}\left(1+2e^{-2}\right) =0.635\ldots.
\end{align*}
For $n\geq 3$, determining bounds for $|\gamma_n|$ when $f\in \mathcal{S}$ appears to be significantly more challenging, and its remains unsolved. \vspace{2mm}

\subsection{Logarithmic inverse coefficients for the class $\mathcal{S}$:}
Let $F$ be the inverse function of $f\in\mathcal{S}$ defined in a neighborhood of the origin with the Taylor series expansion
\begin{align}\label{eq-1.11}
	F(w):=f^{-1}(w)= w+\sum_{n=2}^{\infty} A_n w^n,
\end{align}
where we may choose $|w|<1/4$, as we know that the famous K$\ddot{\mbox{o}}$ebe’s $1/4$-theorem ensures that, for each univalent function $f$ defined in $\mathbb{D}$, it inverse $f^{-1}$ exists at least on a disc of radius $1/4$. There has been a good deal of interest in determining the behavior of the inverse coefficients of $f$ given in \eqref{eq-1.1} when the corresponding function $f$ is restricted to some proper geometric subclasses of $\mathcal{S}$.\vspace{2mm}

Let $f(z)=z+ \sum_{n=2}^{\infty}a_nz^n$ be a function in class $\mathcal{S}$. Since $f(f^{-1}(w))=w$ and using \eqref{eq-1.11} we obtain
\begin{align}\label{eq-1.12}
	\begin{cases}
		A_2= -a_2, \\ A_3=-a_3 +2a^2_{2}, \\ A_4=- a_4 +5a_2 a_3 -5a^3_{2}.
	\end{cases}
\end{align}
The notation of the logarithmic inverse coefficient of $f$ was introduced by Ponnusamy \textit{et al.} (see \cite{Ponnusamy-Sharma-Wirths-RM-2018}). As with $f$, the logarithmic inverse coefficients $\Gamma_n:=\Gamma_n(F)$, $n\in\mathbb{N}$, of $F$ are defined by the equation
\begin{align}\label{eq-1.13}
	F_{f^{-1}}(w):=\log\left(\frac{f^{-1}(w)}{w}\right)=2\sum_{n=1}^{\infty} \Gamma_n(F) w^n \;\;\;\; \mbox{for} \;\;|w|<1/4.
\end{align}
The authors of \cite{Ponnusamy-Sharma-Wirths-RM-2018} obtained the sharp bound for the logarithmic inverse coefficients for the class $\mathcal{S}$ and also for functions for subclasses in $\mathcal{S}$. By differentiating \eqref{eq-1.13} together with \eqref{eq-1.11}, using \eqref{eq-1.12} and then equating coefficients, we obtain
\begin{align}\label{eq-1.14}
	\begin{cases}
		\Gamma_1=-\dfrac{1}{2}a_2, \vspace{2mm}\\ \Gamma_2=-\dfrac{1}{2}\left(a_3 -\frac{3}{2}a^2_{2}\right), \vspace{2mm}\\ \Gamma_3=-\frac{1}{2}\left(a_4 -4a_2 a_3 +\frac{10}{3}a^3_{2}\right).
	\end{cases}
\end{align}

\subsection{Toeplitz determinant with logarithmic coefficients and logarithmic inverse coefficients:}
In recent years, considerable attention has been given to determining the sharp bounds for the second Hankel and Toeplitz determinants involving logarithmic coefficients and logarithmic coefficients of inverse functions of functions \cite{Kowalczyk-Lecko-BAMS-2022, Kowalczyk-Lecko-RACSAM-2023,Allu-Shaji-BAMS.-2024,Mandal-Ahamed-LMJ-2024,Mandal-Ahamed-Zaprawa-MS-2025}. \vspace{2mm}

Hankel and Toeplitz matrices are closely related to each other. Hankel matrices contain constant entries along the reverse diagonals. In $2016$, Ye and Lim \cite{Ye-Lim-FCM-2016} proved that any $n\times n$ matrix over $\mathbb{C}$ generically can be written as the product of some Toeplitz matrices or Hankel matrices. Hankel matrices and their determinants are fundamental in various mathematical disciplines, with numerous applications across different fields \cite{Ye-Lim-FCM-2016}. Similarly, Toeplitz matrices and Toeplitz determinants hold significant importance in both pure and applied mathematics \cite{Toeplitz-1907}. They occur in analysis, integral equations, image processing, signal processing, quantum mechanics and among other areas. For more applications, we refer to the survey article \cite{Ye-Lim-FCM-2016}. \vspace{2mm}

The study of Toeplitz determinants for starlike or many other functions has been done extensively, their sharp bounds have been established. But, the study of Toeplitz determinants, whose entries are logarithmic coefficients, as well as logarithmic coefficients of inverse functions of $f\in\mathcal{S}$ is very little and whether all of those bounds will be sharp or not, is not yet known. In that case, what will be the external functions is also not known till date. The significance of logarithmic coefficients makes the proposed problem worth considering and interesting. The results enlarge the scope of knowledge on logarithmic coefficients. \vspace{2mm}

In this article, we investigate the second Toeplitz determinant associated with logarithmic coefficients and the logarithmic coefficients of the inverse function for functions, as follows,
\begin{align}\label{eq-1.15}
	T_{2,1}(F_{f}/2):=\begin{vmatrix}
		\gamma_{1} & \gamma_{2}\\ \gamma_{2} & \gamma_{1}
	\end{vmatrix}, \hspace{2cm} T_{2,1}(F_{f^{-1}}/2):=\begin{vmatrix}
		\Gamma_{1} & \Gamma_{2}\\ \Gamma_{2} & \Gamma_{1}
	\end{vmatrix}
\end{align}
and
\begin{align}\label{eq-1.16}
	T_{2,2}(F_{f}/2):=\begin{vmatrix}
		\gamma_{2} & \gamma_{3}\\ \gamma_{3} & \gamma_{2}
	\end{vmatrix}, \hspace{2cm} T_{2,2}(F_{f^{-1}}/2):=\begin{vmatrix}
		\Gamma_{2} & \Gamma_{3}\\ \Gamma_{3} & \Gamma_{2}
	\end{vmatrix}.
\end{align}
Despite extensive research on coefficient problems for various classes of analytic functions on the unit disk, the study of logarithmic coefficients, as well as logarithmic inverse coefficients and their formations in the Hankel or Toeplitz determinants, remains largely unexplored, with only limited results available in \cite{Allu-Arora-Shaji-MJM-2023,Allu-Shaji-BAMS-2024,Allu-Shaji-BAMS.-2024,Mandal-Ahamed-LMJ-2024,Man-Roy-Aha-IJS-2024,Kowalczyk-Lecko-BAMS-2022,Kowalczyk-Lecko-BMMS-2022,Kowalczyk-Lecko-RACSAM-2023}. This lack of attention serves as the primary motivation for the present paper. Our main objective is to fill this gap in the existing literature and contribute to the understanding of logarithmic coefficient problems for certain classes of analytic functions, as well as the sharpness of the results. \vspace{2mm}

In this article, we aim to determine sharp bounds for various problems in geometric function theory. These include sharp bounds for the Toeplitz determinants associated with the logarithmic coefficients, as well as logarithmic coefficients of inverse functions of functions. The organization of the paper is as follows: In Section 2, we present preliminary results that play a key role in our proofs. In Section 3, we establish sharp bounds for the Toeplitz determinants with logarithmic coefficients and the logarithmic coefficients of inverse functions for functions in the classes $\mathcal{S}^*(\alpha)$, $\mathcal{C}(\alpha)$, $\mathcal{S}^*_{\alpha}$, $\mathcal{C}_{\alpha}$ and $\mathcal{R}(\alpha)$, respectively. \vspace{2mm}

\section{\bf Preliminary results}
The class $\mathcal{P}$ of all analytic functions $q$ in $\mathbb{D}$ satisfying $q(0)=1$ and $\mbox{Re}\;q(z)>0$ for $z\in\mathbb{D}$. Thus, every $p\in\mathcal{P}$ can be represented as
\begin{align}\label{eq-2.3}
	q(z)=1+\sum_{n=1}^{\infty}b_n z^n,\; z\in\mathbb{D}.
\end{align}
Elements of the class $\mathcal{P}$ are called  Carath$\acute{e}$odory functions. It is known that $|b_n|\leq 2$, $n\geq 1$ for a function $q\in\mathcal{P}$ (see \cite{Duren-1983-NY}). \vspace{2mm}

Suppose $\mathcal{B}_0$ be the class of Schwarz function \textit{i.e} analytic function $w:\mathbb{D}\rightarrow\mathbb{D}$ such that $w(0)=0$. Thus, a function $w\in\mathcal{B}_0$ can be written as a power series
\begin{align}\label{eq-2.1}
	w(z)=\sum_{n=1}^{\infty}c_n z^n.
\end{align}
It is clear that if
\begin{align*}
	p(z)=\frac{1+\omega(z)}{1-\omega(z)}
\end{align*}
then 
\begin{align*}
	p\in\mathcal{P} \;\;\mbox{if and only if}\;\; w\in\mathcal{B}_0.
\end{align*}
Due to the evident connection between Carathéodory functions and Schwarz functions, the results applicable to the coefficients of Schwarz functions are useful in solving coefficient problems associated with starlike functions (see \cite{Zaprawa-RM-2024, Zaprawa-BMMSS-2023}). To prove our main results, we need the following lemmas for the Schwarz functions.
\begin{lem}\cite{Carlson-AMAF-1940}\label{lem-2.1}
Let $ w(z)=c_1 z +c_2 z^2 + c_3 z^3+\ldots $ be a Schwarz function. Then 	
\begin{align*}
	|c_1|\leq 1, \;\;|c_2|\leq 1 -|c_1|^2, \;\;\mbox{and}\;\; |c_3|\leq 1 -|c_1|^2 -\dfrac{|c_2|^2}{1+|c_1|}.
\end{align*}
\end{lem}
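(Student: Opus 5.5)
The three inequalities of Lemma~\ref{lem-2.1} are classical (Carlson), and I would prove them in increasing order of difficulty, each time passing from $w$ to a Schwarz-type function with one coefficient removed. The tools are the Schwarz lemma and the Schur algorithm.

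For the first bound, Schwarz's lemma gives $|w(z)|\le |z|$, so $g(z)=w(z)/z$ is analytic with $|g|\le 1$ in $\mathbb{D}$. Hence $|c_1|=|g(0)|\le 1$.

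For the second bound, I would use the Schur step. If $|c_1|=1$, then $g$ is a unimodular constant, so $w(z)=c_1 z$ and all later coefficients vanish; the claims are then trivial. Otherwise $|c_1|<1$, and I set
\begin{align*}
	h(z)=\frac{1}{z}\cdot\frac{g(z)-c_1}{1-\overline{c_1}\,g(z)}.
\end{align*}
Since $g$ maps into the closed disk and the M\"obius map sends $c_1$ to $0$, the Schwarz lemma shows that $h$ is analytic with $|h|\le 1$. Expanding $g(z)=c_1+c_2z+c_3z^2+\cdots$ gives
\begin{align*}
	h(z)=\frac{c_2+c_3 z+\cdots}{1-|c_1|^2-\overline{c_1}c_2 z-\cdots}.
\end{align*}
Writing $h(z)=d_0+d_1z+\cdots$, we get
\begin{align*}
	d_0=\frac{c_2}{1-|c_1|^2},\qquad d_1=\frac{c_3}{1-|c_1|^2}+\frac{\overline{c_1}\,c_2^2}{(1-|c_1|^2)^2}.
\end{align*}
Then $|d_0|\le 1$ is exactly $|c_2|\le 1-|c_1|^2$.

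For the third bound, I would apply the same inequality to $h$, that is, the first two bounds applied to the bounded function $h$ in place of $w/z$. This gives $|d_1|\le 1-|d_0|^2$. Substituting the formulas for $d_0$ and $d_1$ and multiplying by $1-|c_1|^2$ yields
\begin{align*}
	|c_3|\le 1-|c_1|^2-\frac{|c_2|^2}{1-|c_1|^2}+\frac{|c_1||c_2|^2}{1-|c_1|^2}.
\end{align*}
Here I use the triangle inequality in the form $|c_3|\le (1-|c_1|^2)|d_1|+\frac{|c_1||c_2|^2}{1-|c_1|^2}$. Combining the last two terms, the coefficient of $|c_2|^2$ is $-(1-|c_1|)/(1-|c_1|^2)=-1/(1+|c_1|)$, which is exactly the stated bound.

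The main obstacle is the bookkeeping of $d_1$, in particular computing the second coefficient of the quotient and applying the triangle inequality in the correct direction; everything else is routine. Degenerate cases such as $|d_0|=1$ cause no trouble, because the inequality $|d_1|\le1-|d_0|^2$ still holds there.
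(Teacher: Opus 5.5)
Your proof is correct. The Schur step $h(z)=\frac{1}{z}\cdot\frac{g(z)-c_1}{1-\overline{c_1}\,g(z)}$ is legitimate once $|c_1|<1$: the M\"obius map sends the closed disk into itself and its denominator cannot vanish. Your coefficients $d_0$ and $d_1$ are computed correctly. Applying your second bound to the Schwarz function $zh(z)$, and then the triangle inequality to $c_3=(1-|c_1|^2)d_1-\overline{c_1}c_2^2/(1-|c_1|^2)$, gives exactly $1-|c_1|^2-|c_2|^2/(1+|c_1|)$.

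The paper does not prove this lemma; it simply quotes it from Carlson's 1940 paper, so there is no argument in the paper to compare with step by step. Your route gives a self-contained derivation using only the Schwarz lemma and the maximum principle. It also shows where the bounds are attained. For instance, in your degenerate case $|d_0|=1$ you get $d_1=0$ and $|c_3|=|c_1|(1-|c_1|^2)$, which is equality in the third inequality. The citation, by contrast, buys brevity.

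Note also that the paper only ever uses the first two inequalities, through $|c_1|=x\le 1$ and $|c_2|\le 1-x^2$. The estimate for $|c_3|$ is never invoked, because $\gamma_3$ is handled by Lemma~\ref{lem-2.2}.
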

\begin{lem}\cite{Prokhorov-Szynal-1981}\label{lem-2.2}
If $ w(z)=c_1 z +c_2 z^2 + c_3 z^3+\ldots\in\mathcal{B}_0 $, then for any real numbers $\mu$ and $\nu$, the following sharp estimate holds:
\begin{align*}
	|c_3 +\mu c_1 c_2 +\nu c^3_1|\leq\Phi(\mu,\nu),
\end{align*}
where
\begin{align*}
	\Phi(\mu,\nu)=\begin{cases}
		1\hspace{2cm}\mbox{if}\hspace{0.5cm}(\mu,\nu)\in D_1\cup D_2\cup \{(2,1)\}, \vspace{2mm}\\ |\nu|\hspace{1.8cm}\mbox{if} \hspace{0.5cm}(\mu,\nu)\in\cup_{k=3}^{7} D_k.
	\end{cases}
\end{align*}
While the sets $D_k, k=1,2,\ldots,7$ are defined as follows:
\begin{align*}
	&D_1=\left\{(\mu,\nu): |\mu|\leq\frac{1}{2},\hspace{0.5cm} -1\leq\nu\leq 1\right\},\vspace{2mm}\\ &D_2=\left\{(\mu,\nu): \frac{1}{2}\leq|\mu|\leq 2,\hspace{0.5cm} \frac{4}{27}(|\mu|+1)^3 -(|\mu|+1)\leq\nu\leq 1\right\},\vspace{2mm}\\&D_3= \left\{(\mu,\nu): |\mu|\leq\frac{1}{2},\hspace{0.5cm} \nu\leq -1\right\},\vspace{2mm}\\ &D_4=\left\{(\mu,\nu): |\mu|\geq \frac{1}{2},\hspace{0.5cm} \nu\leq -\frac{2}{3}(|\mu|+1) \right\},\vspace{2mm}\\ &D_5=\left\{(\mu,\nu): |\mu|\leq 2, \hspace{0.5cm} \nu\geq 1\right\},\vspace{2mm}\\ &D_6= \left\{(\mu,\nu): 2\leq|\mu|\leq 4,\hspace{0.5cm} \nu\geq \frac{1}{12}(\mu^2 +8)\right\},\vspace{2mm}\\ &D_7= \left\{(\mu,\nu): |\mu|\geq 4,\hspace{0.5cm} \nu\leq \frac{2}{3}(|\mu|-1) \right\}.
\end{align*}
\end{lem}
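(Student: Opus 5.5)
The plan is to reduce the problem to an explicit extremal problem in three bounded complex parameters, using the Schur parametrization of the first three coefficients of a Schwarz function. This parametrization refines Lemma \ref{lem-2.1}. For $w\in\mathcal{B}_0$ there exist $\zeta_1,\zeta_2,\zeta_3\in\overline{\mathbb{D}}$ such that
\begin{align*}
c_1=\zeta_1,\qquad c_2=(1-|\zeta_1|^2)\zeta_2,\qquad c_3=(1-|\zeta_1|^2)\left[(1-|\zeta_2|^2)\zeta_3-\overline{\zeta_1}\,\zeta_2^2\right].
\end{align*}
Conversely, every such triple is realized by some Schwarz function. It is obtained from the Schur algorithm applied to $w(z)/z$, with finite Blaschke products giving the boundary cases.

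Next I exploit the symmetries. Replacing $w(z)$ by $e^{-i\theta}w(e^{i\theta}z)$ multiplies the functional $c_3+\mu c_1c_2+\nu c_1^3$ by a unimodular constant. So I may assume $\zeta_1=x\in[0,1]$. Replacing $\zeta_2$ by $-\zeta_2$ changes $\mu$ into $-\mu$, which explains why only $|\mu|$ enters the regions $D_k$; I may therefore take $\mu\ge 0$.

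Inserting the parametrization, the functional equals $(1-x^2)(1-|\zeta_2|^2)\zeta_3+G$, where
\begin{align*}
G=(1-x^2)\left(\mu x\zeta_2-x\zeta_2^2\right)+\nu x^3 .
\end{align*}
The variable $\zeta_3$ appears only linearly, so maximizing over it gives
\begin{align*}
|c_3+\mu c_1c_2+\nu c_1^3|\le \Psi(x,y,\varphi):=(1-x^2)(1-y^2)+\left|(1-x^2)x\left(\mu y e^{i\varphi}-y^2e^{2i\varphi}\right)+\nu x^3\right|,
\end{align*}
with $\zeta_2=ye^{i\varphi}$ and $y\in[0,1]$. The two candidate values are attained at the corners. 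At $x=0$ (the function $w(z)=z^3$) we get $\Psi=1$. At $x=1$ (the function $w(z)=z$) we get $\Psi=|\nu|$. This gives both the lower bound $\Phi(\mu,\nu)\ge\max\{1,|\nu|\}$ and the sharpness.

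The main work, and the step I expect to be the obstacle, is proving $\max\Psi=1$ on $D_1\cup D_2\cup\{(2,1)\}$ and $\max\Psi=|\nu|$ on $D_3\cup\dots\cup D_7$.

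First I would maximize over $\varphi$. Writing $t=\cos\varphi$, the squared modulus is a quadratic polynomial in $t$ on $[-1,1]$, so the maximum is either at $t=\pm1$ or at an interior critical point. Each case gives an explicit function of $(x,y)$.

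Then I would maximize over $y$, and afterwards over $x$. The aim is to show in each region that $\Psi-1\le0$ (respectively $\Psi-|\nu|\le0$). The method is to factor out $x^2$ and then check that a polynomial in $x^2$ has the right sign. The boundary curves in the statement should emerge exactly as the sign-change conditions in this analysis:
\begin{itemize}
\item $\nu=\tfrac{4}{27}(|\mu|+1)^3-(|\mu|+1)$ from a cubic in $x$ reaching a double root;
\item $\nu=-\tfrac23(|\mu|+1)$ and $\nu=\tfrac23(|\mu|-1)$ from tangency at $x=1$;
\item $\nu=\tfrac1{12}(\mu^2+8)$ from the interior critical point in $t$.
\end{itemize}
If the direct calculus becomes unmanageable, I would first try splitting according to the sign of $\nu$. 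For $\nu\ge0$ one expects the real choice $\varphi=0$ (or $\varphi=\pi$) to be optimal, which reduces everything to a two-variable real polynomial inequality on $[0,1]^2$. For $\nu<0$ the $t$-critical point becomes relevant, and each region $D_k$ would be handled separately.
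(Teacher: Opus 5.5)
The paper gives no proof of this lemma; it is quoted from \cite{Prokhorov-Szynal-1981}, so your proposal has to stand on its own. Your reduction is sound: the Schur formulas for $c_1,c_2,c_3$ are correct, maximizing over $\zeta_3$ gives $\Phi(\mu,\nu)=\max\Psi$, and the corners $x=0$, $x=1$ give $\Phi\ge\max\{1,|\nu|\}$.

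There is one slip in the symmetry step. The map $e^{-i\theta}w(e^{i\theta}z)$ sends $c_n\mapsto e^{i(n-1)\theta}c_n$, which does \emph{not} multiply $c_3+\mu c_1c_2+\nu c_1^3$ by a unimodular factor. Use $w(e^{i\theta}z)$ instead: it sends $c_n\mapsto e^{in\theta}c_n$ and multiplies the functional by $e^{3i\theta}$.

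More seriously, you set out to prove $\max\Psi=|\nu|$ on $D_7$ as printed, and that is false. The point $(\mu,\nu)=(4,0)$ lies in the printed $D_7$, yet $w(z)=z^3$ gives $|c_3+4c_1c_2|=1>0$. Your own bound $\Phi\ge\max\{1,|\nu|\}$ already detects this. The correct region, as in Prokhorov--Szynal, is $D_7=\{|\mu|\ge4,\ \nu\ge\tfrac23(|\mu|-1)\}$, and your $\Psi$ shows why. For $\nu>0$, $\mu\ge0$ and $x=1-\varepsilon$,
\[
\Psi=\nu+\varepsilon\left(2-3\nu+2\mu u-4u^2\right)+O(\varepsilon^2),\qquad u=y\cos\varphi\in[-1,1].
\]
So $\Psi\le\nu$ near $x=1$ forces $\nu\ge\tfrac1{12}(\mu^2+8)$ if $\mu\le4$ and $\nu\ge\tfrac23(\mu-1)$ if $\mu\ge4$. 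These are exactly the lower boundaries of $D_6$ and of the corrected $D_7$.

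The decisive gap, however, is that the proposal stops where the lemma begins. The inequalities $\Psi\le1$ on $D_1\cup D_2$ and $\Psi\le|\nu|$ on $D_3\cup\dots\cup D_7$, for all $(x,y,\varphi)$, are only announced. Moreover, your fallback has the two signs of $\nu$ the wrong way round. With $A=\mu x(1-x^2)y$, $B=x(1-x^2)y^2\ge0$, $C=\nu x^3$ and $t=\cos\varphi$,
\[
\left|C+Ae^{i\varphi}-Be^{2i\varphi}\right|^2=A^2+(B+C)^2+2A(C-B)t-4BCt^2 .
\]

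For $\nu\le0$ this is convex in $t$, so there is no interior critical point. For $\mu\ge0$, $t=-1$ (i.e.\ $\zeta_2=-y$) is optimal, and then $\Psi=(1-x^2)\bigl(1+\mu xy-(1-x)y^2\bigr)+|\nu|x^3$. Where $y=1$ is optimal, this is the cubic $sx-(s-|\nu|)x^3$ with $s=|\mu|+1$. Its maximum on $[0,1]$ is $\le|\nu|$ iff $|\nu|\ge\tfrac23 s$, and, for $|\mu|\ge\tfrac12$, it is $\le1$ iff $\nu\ge\tfrac4{27}s^3-s$. Together with the regime $y=\mu x/(2(1-x))<1$, this settles $\nu\le0$, i.e.\ $D_3$, $D_4$ and the parts of $D_1$, $D_2$ with $\nu\le0$.

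For $\nu>0$ the quadratic is concave in $t$. Restricting to $\varphi\in\{0,\pi\}$ --- exactly what you propose in this case --- is therefore unjustified. This case (the rest of $D_1$, $D_2$ and all of $D_5$--$D_7$) is where the real work lies, and nothing in the proposal handles it. What you have is a correct reduction to a three-variable extremal problem, not a proof.
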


\section{\bf Main Results and Proofs}
We state our first main result, which determines the sharp bound for $|T_{2,1}(F_{f}/2)|$ as well as $|T_{2,2}(F_{f}/2)|$ for functions in the class $\mathcal{S}^*(\alpha)$.
\begin{thm}\label{th-3.1}
Suppose that $0\leq\alpha <1$. If $ f(z)=z+a_2z^2+a_3z^3+\cdots\in \mathcal{S}^*(\alpha)$. Then 
\begin{align*}
	|T_{2,1}(F_{f}/2)|\leq\frac{5(1-\alpha)^2}{4}
\end{align*}
and
\begin{align*}
	|T_{2,2}(F_{f}/2)|\leq\frac{13(1-\alpha)^2}{36}.
\end{align*}
The above inequalities are sharp.
\end{thm}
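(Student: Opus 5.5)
The plan is to reduce everything to the coefficients of a Schwarz function and then bound each entry of the determinant separately, using the triangle inequality on $|\gamma_1^2-\gamma_2^2|$ and $|\gamma_2^2-\gamma_3^2|$.

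\textbf{Representation.} For $f\in\mathcal{S}^*(\alpha)$, condition \eqref{eq-1.4} means that $\bigl(zf'(z)/f(z)-\alpha\bigr)/(1-\alpha)\in\mathcal{P}$. Hence there is $w\in\mathcal{B}_0$, $w(z)=\sum c_n z^n$, with
\begin{align*}
\frac{zf'(z)}{f(z)}=\frac{1+(1-2\alpha)w(z)}{1-w(z)} .
\end{align*}
Differentiating \eqref{eq-1.9} gives
\begin{align*}
zF_f'(z)=\frac{zf'(z)}{f(z)}-1=\frac{2(1-\alpha)w(z)}{1-w(z)}=2(1-\alpha)\bigl(w+w^2+w^3+\cdots\bigr).
\end{align*}
Comparing coefficients in $2\sum n\gamma_n z^n$ yields
\begin{align*}
\gamma_1=(1-\alpha)c_1,\qquad \gamma_2=\frac{1-\alpha}{2}\bigl(c_2+c_1^2\bigr),\qquad \gamma_3=\frac{1-\alpha}{3}\bigl(c_3+2c_1c_2+c_1^3\bigr).
\end{align*}

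\textbf{Bounding the entries.} By Lemma \ref{lem-2.1}, $|\gamma_1|\le 1-\alpha$. Also
\begin{align*}
|c_2+c_1^2|\le (1-|c_1|^2)+|c_1|^2=1,
\end{align*}
so $|\gamma_2|\le (1-\alpha)/2$. For $\gamma_3$, apply Lemma \ref{lem-2.2} with $(\mu,\nu)=(2,1)$, which is exactly the exceptional point where $\Phi=1$. This gives $|c_3+2c_1c_2+c_1^3|\le 1$, so $|\gamma_3|\le(1-\alpha)/3$. It then follows that
\begin{align*}
|T_{2,1}(F_f/2)|\le|\gamma_1|^2+|\gamma_2|^2\le(1-\alpha)^2\Bigl(1+\tfrac14\Bigr),\qquad |T_{2,2}(F_f/2)|\le|\gamma_2|^2+|\gamma_3|^2\le(1-\alpha)^2\Bigl(\tfrac14+\tfrac19\Bigr),
\end{align*}
which are the claimed bounds $5(1-\alpha)^2/4$ and $13(1-\alpha)^2/36$.

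\textbf{Sharpness.} This is the only delicate point. The triangle inequality is an equality only when $\gamma_1^2$ and $-\gamma_2^2$ (respectively $\gamma_2^2$ and $-\gamma_3^2$) point in the same direction, each with maximal modulus. A rotation $w(z)=e^{i\theta}z$ gives $c_1=e^{i\theta}$ and $c_n=0$ for $n\ge2$, hence
\begin{align*}
\gamma_n=\frac{(1-\alpha)e^{in\theta}}{n}.
\end{align*}
Then $\gamma_1^2\bar\gamma_2^{\,2}$ has argument $-2\theta$ and $\gamma_2^2\bar\gamma_3^{\,2}$ has argument $-2\theta$. Choosing $\theta=\pi/2$ makes both quantities negative real numbers, which is what equality requires. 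The extremal function is therefore
\begin{align*}
f(z)=\frac{z}{(1-iz)^{2(1-\alpha)}}\in\mathcal{S}^*(\alpha).
\end{align*}
For it, $\gamma_1=i(1-\alpha)$, $\gamma_2=-(1-\alpha)/2$ and $\gamma_3=-i(1-\alpha)/3$. Hence $T_{2,1}=-(1-\alpha)^2-\tfrac14(1-\alpha)^2$ and $T_{2,2}=\tfrac14(1-\alpha)^2+\tfrac19(1-\alpha)^2$, attaining both bounds.
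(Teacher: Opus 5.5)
Your proof is correct and follows the paper's argument. It uses the same Schwarz-function expressions for $\gamma_1,\gamma_2,\gamma_3$, the same entrywise bounds via Lemmas \ref{lem-2.1} and \ref{lem-2.2} at $(\mu,\nu)=(2,1)$, the triangle inequality $|\gamma_j^2-\gamma_k^2|\le|\gamma_j|^2+|\gamma_k|^2$, and the same extremal function $z/(1-iz)^{2(1-\alpha)}$. The only differences are that you read off $\gamma_n$ directly from $zF_f'(z)=2(1-\alpha)w/(1-w)$ rather than first computing $a_2,a_3,a_4$, and that you explain why $\theta=\pi/2$ is the right rotation instead of just verifying it, both of which make the argument slightly more transparent.
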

\begin{proof}
Let $f\in\mathcal{S}^*(\alpha)$ be of the form $ f(z)=z +\sum_{n=2}^{\infty} a_n z^n $, $z\in \mathbb{D}$. By \eqref{eq-1.4}, we have
\begin{align}\label{eq-3.1}
	\frac{zf^{\prime}(z)}{f(z)}=(1-\alpha)\frac{1+\omega(z)}{1-\omega(z)} +\alpha
\end{align}
for some $\omega\in\mathcal{B}_0$ of the form \eqref{eq-2.1}. A routine computation shows that
\begin{align*}
	\frac{zf^{\prime}(z)}{f(z)}= 1+a_2 z +(-a^2_{2} +2a_3)z^2 &+(a^3_{2} -3a_2 a_3 +3a_4)z^3 \\& +(-a^4_{2} +4a^2_{2}a_3 -2a^2_{3} -4a_2 a_4 +4a_5)z^4 +\cdots.
\end{align*}
Also we have
\begin{align*}
	(1-\alpha)\frac{1+\omega(z)}{1-\omega(z)} +\alpha= 1 + 2(1-\alpha) c_1 z &+ 2(1-\alpha)(c^2_1 + c_2)z^2 \\&+ 2(1-\alpha)(c^3_1 + 2c_1 c_2 +c_3)z^3 +\cdots.
\end{align*}
By comparing the coefficients of $z, z^2, z^3, $ from both side of \eqref{eq-3.1}, we get
\begin{align}\label{eq-3.2}
	\begin{cases}
		&a_2= 2(1-\alpha) c_1, \vspace{2mm}\\ &a_3=(1-\alpha) \left((3-2\alpha)c^2_1 +c_2\right), \vspace{2mm}\\ &a_4=\dfrac{2(1-\alpha)}{3}\left((6-7\alpha +2\alpha^2)c^3_1 +(5-3\alpha) c_1 c_2 +c_3\right).
	\end{cases}
\end{align}
In view of \eqref{eq-1.10}, \eqref{eq-3.2} and Lemma \ref{lem-2.1}, we obtain that
\begin{align*}
	|\gamma_{1}|=(1-\alpha)|c_1|\leq (1-\alpha).
\end{align*}
Again, we have
\begin{align*}
	\gamma_{2}=\frac{1}{2}a_{3} -\frac{1}{4}a^2_{2}= \frac{(1-\alpha)}{2}\left(c^2_1 +c_2\right),
\end{align*}
\textit{i.e.},
\begin{align*}
	|\gamma_{2}|\leq \frac{(1-\alpha)}{2}\left(|c_1|^2 +|c_2|\right) \leq \frac{(1-\alpha)}{2}.
\end{align*}
Using \eqref{eq-3.2} and Lemma \ref{lem-2.2}, we obtain
\begin{align*}
	\gamma_{3} =\frac{1}{2}a_{4}- \frac{1}{2}a_{2}a_{3} +\frac{1}{6}a^3_{2}=\frac{(1-\alpha)}{3}\left(c^3_1 +2 c_1 c_2 +c_3\right),
\end{align*}
\textit{i.e.},
\begin{align*}
	|\gamma_{3}|\leq \frac{(1-\alpha)}{3} |c_3 +2 c_1 c_2 +c^3_1| \leq \frac{(1-\alpha)}{3}.
\end{align*}
From \eqref{eq-1.15} and \eqref{eq-1.16}, we determine the following inequalities
\begin{align*}
	|T_{2,1}(F_{f}/2)|\leq|\gamma_{1}|^2 +|\gamma_{2}|^2 \leq \frac{5(1-\alpha)^2}{4}
\end{align*}
and
\begin{align*}
	|T_{2,2}(F_{f}/2)|\leq|\gamma_{2}|^2 +|\gamma_{3}|^2 \leq \frac{13(1-\alpha)^2}{36}.
\end{align*}
Thus, our desired inequalities are established.\vspace{1.2mm}

 To prove the sharpness in the above inequalities, we consider the function $h_1\in\mathcal{S}^*(\alpha)$ such that
\begin{align}\label{Eq-3.3}
	h_1(z)=\frac{z}{(1-iz)^{2(1-\alpha)}}&= z+ 2i(1-\alpha)z^2 -(1-\alpha)(3-2\alpha)z^3 \\&\nonumber\quad -\frac{2}{3}i(2-\alpha) (1-\alpha) (3-2\alpha)z^4 +\cdots.
\end{align}
It is easy to see that 
\begin{align*}
	a_2=2i(1-\alpha),\; a_3=-(1-\alpha)(3-2\alpha)\; \mbox{and}\; a_4=-\frac{2}{3}i(2-\alpha) (1-\alpha) (3-2\alpha).
\end{align*}Also, an elementary computation shows that
\begin{align*}
	|T_{2,1}(F_{f}/2)|=\frac{5(1-\alpha)^2}{4} \hspace{0.5cm}\mbox{and}\hspace{0.5cm} |T_{2,2}(F_{f}/2)|= \frac{13(1-\alpha)^2}{36}.
\end{align*}
This completes the proof.
\end{proof}

We obtain the following immediate corollary for the class $\mathcal{S}^*$.
\begin{cor}
If $ f(z)=z+a_2z^2+a_3z^3+\cdots\in \mathcal{S}^*$, then we have
\begin{align*}
	|T_{2,1}(F_{f}/2)|\leq\frac{5}{4} \hspace{0.5cm}\mbox{and} \hspace{0.5cm} |T_{2,2}(F_{f}/2)|\leq\frac{13}{36}.
\end{align*}
The above inequalities are sharp.
\end{cor}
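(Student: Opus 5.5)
This corollary should follow from Theorem \ref{th-3.1} by specializing to $\alpha=0$. Condition \eqref{eq-1.4} with $\alpha=0$ is exactly condition \eqref{eq-1.2}, so $\mathcal{S}^*(0)=\mathcal{S}^*$. Setting $\alpha=0$ in the two inequalities gives
\begin{align*}
|T_{2,1}(F_{f}/2)|\leq \frac{5}{4}, \qquad |T_{2,2}(F_{f}/2)|\leq \frac{13}{36}.
\end{align*}
Behind these stand the coefficient bounds $|\gamma_1|\le 1$, $|\gamma_2|\le 1/2$ and $|\gamma_3|\le 1/3$. The second bound uses Lemma \ref{lem-2.1}. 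The third uses Lemma \ref{lem-2.2} with $(\mu,\nu)=(2,1)$, a point where $\Phi=1$.

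For sharpness I would take the extremal function $h_1$ of \eqref{Eq-3.3} with $\alpha=0$, namely $h_1(z)=z/(1-iz)^2$. This is a rotation of the Koebe function and lies in $\mathcal{S}^*$. Its coefficients are $a_2=2i$, $a_3=-3$ and $a_4=-4i$. From \eqref{eq-1.10} these give
\begin{align*}
\gamma_1=i,\qquad \gamma_2=-\tfrac12,\qquad \gamma_3=-\tfrac{i}{3}.
\end{align*}
Therefore
\begin{align*}
T_{2,1}=\gamma_1^2-\gamma_2^2=-1-\tfrac14=-\tfrac54, \qquad T_{2,2}=\gamma_2^2-\gamma_3^2=\tfrac14+\tfrac19=\tfrac{13}{36}.
\end{align*}
Both bounds are attained.

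The only point needing care is the sign interplay. Equality in $|\gamma_1^2-\gamma_2^2|\le|\gamma_1|^2+|\gamma_2|^2$ requires $\gamma_1^2$ and $\gamma_2^2$ to point in opposite directions, which is why the rotation by $i$ is used. The direct computation above confirms that this happens for $h_1$, so no obstacle remains.
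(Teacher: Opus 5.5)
Your proposal is correct and matches the paper's argument. The corollary is Theorem \ref{th-3.1} at $\alpha=0$: the bounds $|\gamma_1|\le 1$, $|\gamma_2|\le \tfrac12$, $|\gamma_3|\le \tfrac13$ come from Lemmas \ref{lem-2.1} and \ref{lem-2.2}, and sharpness comes from $h_1(z)=z/(1-iz)^2$, for which $\gamma_n=i^n/n$, giving $|T_{2,1}(F_f/2)|=\tfrac54$ and $T_{2,2}(F_f/2)=\tfrac{13}{36}$ as you computed.
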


In the next theorem, we obtain the sharp bound for the Toeplitz determinant $|T_{2,1}(F_{f^{-1}}/2)|$ for the logarithmic coefficients of inverse functions of functions in the class $\mathcal{S}^*(\alpha)$.
\begin{thm}\label{th-3.2}
Assume that $0\leq\alpha <1$. If $ f(z)=z+a_2z^2+a_3z^3+\cdots\in \mathcal{S}^*(\alpha)$. Then 
\begin{align*}
	|T_{2,1}(F_{f^{-1}}/2)|\leq\dfrac{(1-\alpha)^2}{4}(13-24\alpha +16\alpha^2).
\end{align*}
The inequality is sharp.
\end{thm}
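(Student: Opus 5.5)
The plan is to follow the proof of Theorem \ref{th-3.1}: express $\Gamma_1,\Gamma_2$ through the Schwarz coefficients $c_1,c_2$ of the function $\omega$ in \eqref{eq-3.1}, estimate with Lemma \ref{lem-2.1}, and check equality on the function $h_1$ in \eqref{Eq-3.3}.

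\textbf{Coefficients.} Substituting \eqref{eq-3.2} into \eqref{eq-1.14} gives
\begin{align*}
\Gamma_1=-(1-\alpha)c_1,\qquad \Gamma_2=-\frac{1}{2}\Bigl(a_3-\frac{3}{2}a_2^2\Bigr)=-\frac{1-\alpha}{2}\bigl((4\alpha-3)c_1^2+c_2\bigr).
\end{align*}
The second formula uses $(3-2\alpha)-6(1-\alpha)=4\alpha-3$. Since $T_{2,1}(F_{f^{-1}}/2)=\Gamma_1^2-\Gamma_2^2$, we have $|T_{2,1}(F_{f^{-1}}/2)|\le|\Gamma_1|^2+|\Gamma_2|^2$.

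\textbf{Estimate.} Put $x=|c_1|^2\in[0,1]$ and $\beta=|4\alpha-3|$. By Lemma \ref{lem-2.1}, $|c_2|\le 1-x$, so
\begin{align*}
|\Gamma_2|\le\frac{1-\alpha}{2}\bigl(\beta x+1-x\bigr).
\end{align*}
Hence
\begin{align*}
|T_{2,1}(F_{f^{-1}}/2)|\le (1-\alpha)^2\,g(x),\qquad g(x)=x+\frac14\bigl(1+(\beta-1)x\bigr)^2 .
\end{align*}
The function $g$ is convex in $x$, so its maximum on $[0,1]$ is attained at an endpoint. We have $g(0)=1/4$ and $g(1)=1+\beta^2/4$, and $g(1)\ge g(0)$. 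Therefore
\begin{align*}
|T_{2,1}(F_{f^{-1}}/2)|\le(1-\alpha)^2\Bigl(1+\frac{(4\alpha-3)^2}{4}\Bigr)=\frac{(1-\alpha)^2}{4}\bigl(13-24\alpha+16\alpha^2\bigr).
\end{align*}

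The one point needing care is the sign of $\beta-1$. For $\alpha>1/2$ we have $\beta<1$, so the bound for $|\Gamma_2|$ is decreasing in $x$ while $|\Gamma_1|^2$ is increasing. The convexity argument handles both regimes at once, so no separate case split is needed.

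\textbf{Sharpness.} Equality requires $|c_1|=1$, hence $c_2=0$. It also requires $\Gamma_1^2$ and $-\Gamma_2^2$ to have the same argument. Take $\omega(z)=iz$, which corresponds to $h_1$ in \eqref{Eq-3.3}, so $c_1=i$ and $c_2=0$. Then
\begin{align*}
\Gamma_1^2=-(1-\alpha)^2,\qquad \Gamma_2=\frac{(1-\alpha)(4\alpha-3)}{2},
\end{align*}
so $\Gamma_2^2>0$. Consequently
\begin{align*}
T_{2,1}(F_{f^{-1}}/2)=-(1-\alpha)^2\Bigl(1+\frac{(4\alpha-3)^2}{4}\Bigr),
\end{align*}
whose modulus equals the bound. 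One can also verify this directly from the coefficients $a_2,a_3$ of $h_1$ via \eqref{eq-1.14}.

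I expect no serious obstacle here; the only delicate step is the one-variable maximization over $x$.
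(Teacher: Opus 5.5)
Your proof is correct and follows the paper's argument. It uses the same formulas for $\Gamma_1,\Gamma_2$, the same triangle-inequality bound via Lemma~\ref{lem-2.1}, and the same extremal function $h_1$; the paper's expanded majorant is exactly $|\Gamma_1|^2$ plus the square of your bound for $|\Gamma_2|$. The paper instead splits into the cases $\alpha\le 3/4$ and $\alpha>3/4$ and checks that $g_1,g_2$ increase. You handle $|3-4\alpha|$ uniformly by convexity in $|c_1|^2$, and you check that $\Gamma_1^2$ and $-\Gamma_2^2$ have the same sign at $c_1=i$, which makes sharpness more transparent.
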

\begin{proof}
By utilizing \eqref{eq-1.14} and \eqref{eq-3.2}, we see that
\begin{align*}
	\Gamma_1=-(1-\alpha)c_1 \hspace{0.5cm}\mbox{and}\hspace{0.5cm} \Gamma_2= \frac{(1-\alpha)}{2}\left((3-4\alpha)c^2_1 -c_2\right).
\end{align*}
From \eqref{eq-1.15}, we obtain the following
\begin{align*}
     T_{2,1}(F_{f^{-1}}/2)=\dfrac{(1-\alpha)^2}{4}\left( -(3-4\alpha)^2 c^4_1 -c^2_2 +4c^2_1 +2(3-4\alpha)c^2_1 c_2\right).
\end{align*}
Suppose that $|c_1|=x$. Applying the triangle inequality and Lemma \ref{lem-2.1} yields the following
\begin{align*}
	|T_{2,1}(F_{f^{-1}}/2)|\leq \dfrac{(1- \alpha)^2}{4} \left((3-4\alpha)^2 x^4 +(1-x^2)^2 +4x^2 +2|3-4\alpha|x^2 (1-x^2)\right).
\end{align*}
Now, we consider two cases:\\

\noindent{\bf Case I:} For $0\leq\alpha\leq\frac{3}{4}$, we get
\begin{align*}
	|T_{2,1}(F_{f^{-1}}/2)|&\leq \dfrac{(1- \alpha)^2}{4} \left( 1+8(1-\alpha)x^2 +4(1-2\alpha)^2 x^4\right)\\&:=\dfrac{(1- \alpha)^2}{4}g_1(x).
\end{align*}
\noindent{\bf Case II:} For $\frac{3}{4}<\alpha<1$, we see that
\begin{align*}
	|T_{2,1}(F_{f^{-1}}/2)|&\leq \dfrac{(1- \alpha)^2}{4} \left( 1+4(2\alpha-1)x^2 +16(1-\alpha)^2 x^4\right)\\&:=\dfrac{(1- \alpha)^2}{4}g_2(x).
\end{align*}
In both cases functions $g_j(x)$ for $j=1,2$ are increasing with respect to $x$ in $0\leq x \leq 1$. Hence, we obtain the following
\begin{align*}
	|T_{2,1}(F_{f^{-1}}/2)|&\leq \dfrac{(1-\alpha)^2}{4} g_j(1) \\&= \dfrac{(1-\alpha)^2}{4}(13-24\alpha +16\alpha^2).
\end{align*}
The equalities hold for the function $h_1\in\mathcal{S}^*(\alpha)$ defined in \eqref{Eq-3.3}. It can be easily shown that
\begin{align*}
	|T_{2,1}(F_{f^{-1}}/2)|=\frac{(1-\alpha)^2}{4}(13-24\alpha +16\alpha^2).
\end{align*}
This completes the proof.
\end{proof}

An immediate consequence is the following corollary for the class $\mathcal{S}^*$.
\begin{cor}
If $ f(z)=z+a_2z^2+a_3z^3+\cdots\in \mathcal{S}^*$, then we have 
\begin{align*}
	|T_{2,1}(F_{f^{-1}}/2)|\leq\frac{13}{4}.
\end{align*}
The inequality is sharp.
\end{cor}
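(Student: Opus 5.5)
The statement to prove is the corollary $|T_{2,1}(F_{f^{-1}}/2)|\leq 13/4$ for $f\in\mathcal{S}^*$. The plan is to specialize Theorem \ref{th-3.2} to $\alpha=0$, using that $\mathcal{S}^*(0)=\mathcal{S}^*$ by definition. Substituting $\alpha=0$ in the bound $\frac{(1-\alpha)^2}{4}(13-24\alpha+16\alpha^2)$ gives $\frac{1}{4}\cdot 13=\frac{13}{4}$, which is the inequality.

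For the reader's reassurance, the underlying estimate at $\alpha=0$ is easy to trace. The representation \eqref{eq-3.2} gives $\Gamma_1=-c_1$ and $\Gamma_2=\frac12(3c_1^2-c_2)$. Hence
\begin{align*}
T_{2,1}(F_{f^{-1}}/2)=\Gamma_1^2-\Gamma_2^2=\tfrac14\left(4c_1^2-(3c_1^2-c_2)^2\right).
\end{align*}
Put $x=|c_1|$. Lemma \ref{lem-2.1} gives $|c_2|\le 1-x^2$, and the triangle inequality then yields
\begin{align*}
4x^2+(3x^2+1-x^2)^2=1+8x^2+4x^4=g_1(x),
\end{align*}
the Case I function with $\alpha=0$. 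This is increasing on $[0,1]$, with $g_1(1)=13$.

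For sharpness we use $h_1$ from \eqref{Eq-3.3} with $\alpha=0$, namely $h_1(z)=z/(1-iz)^2$, a rotation of the Koebe function. Here $a_2=2i$ and $a_3=-3$. Then \eqref{eq-1.14} gives
\begin{align*}
\Gamma_1=-i,\qquad \Gamma_2=-\tfrac12\left(-3-\tfrac32(-4)\right)=-\tfrac32.
\end{align*}
So $\Gamma_1^2-\Gamma_2^2=-1-\tfrac94=-\tfrac{13}{4}$, whose modulus is $13/4$.

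There is no real obstacle. The only point to check is that the equality computation uses the rotation by $i$: $\Gamma_1^2$ and $-\Gamma_2^2$ must have the same sign for the two terms to add in modulus. The factor $i$ makes $\Gamma_1^2=-1$, which achieves this.
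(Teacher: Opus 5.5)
Your proposal is correct and follows the paper's route: the corollary is just Theorem~\ref{th-3.2} at $\alpha=0$. Your retraced estimate $g_1(x)=1+8x^2+4x^4$ and the extremal function $z/(1-iz)^2$, giving $\Gamma_1^2-\Gamma_2^2=-13/4$, are exactly the $\alpha=0$ case of the paper's argument.
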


In the following result, we establish the sharp bound for $|T_{2,1}(F_{f}/2)|$, as well as $|T_{2,2}(F_{f}/2)|$ for functions in the class $\mathcal{C}(\alpha)$.
\begin{thm}\label{th-3.3}
Assume that $0\leq\alpha <1$. If $ f(z)=z+a_2z^2+a_3z^3+\cdots\in \mathcal{C}(\alpha) $. Then 
\begin{align*}
	|T_{2,1}(F_{f}/2)|\leq\frac{(1-\alpha)^2}{144}(45-6\alpha+\alpha^2)
\end{align*}
and
\begin{align*}
	|T_{2,2}(F_{f}/2)|\leq\frac{(1-\alpha)^2}{144}(13-10\alpha+2\alpha^2).
\end{align*}
All the inequalities are sharp.
\end{thm}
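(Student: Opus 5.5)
The plan follows the proof of Theorem \ref{th-3.1}. We represent $f\in\mathcal{C}(\alpha)$ through a Schwarz function, bound $|T_{2,1}(F_f/2)|\le|\gamma_1|^2+|\gamma_2|^2$ and $|T_{2,2}(F_f/2)|\le|\gamma_2|^2+|\gamma_3|^2$, bound each logarithmic coefficient separately, and check that one extremal function makes all the individual bounds sharp simultaneously.

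\textbf{Step 1: coefficient formulas.} By \eqref{eq-1.5} we write
\begin{align*}
1+\frac{zf''(z)}{f'(z)}=(1-\alpha)\frac{1+\omega(z)}{1-\omega(z)}+\alpha,\qquad \omega\in\mathcal{B}_0 .
\end{align*}
The left side expands as $1+2a_2z+(6a_3-4a_2^2)z^2+(12a_4-18a_2a_3+8a_2^3)z^3+\cdots$. Comparing coefficients with the expansion already computed in the proof of Theorem \ref{th-3.1} gives
\begin{align*}
a_2=(1-\alpha)c_1,\qquad a_3=\frac{1-\alpha}{3}\left((3-2\alpha)c_1^2+c_2\right),
\end{align*}
and a corresponding expression for $a_4$. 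Substituting into \eqref{eq-1.10} yields
\begin{align*}
\gamma_1=\frac{1-\alpha}{2}c_1,\qquad \gamma_2=\frac{1-\alpha}{12}\left((3-\alpha)c_1^2+2c_2\right),
\end{align*}
and $\gamma_3=\frac{1-\alpha}{K}\,(c_3+\mu c_1c_2+\nu c_1^3)$ for an explicit constant $K$ and real $\mu=\mu(\alpha)$, $\nu=\nu(\alpha)$.

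\textbf{Step 2: bounds for $\gamma_1$ and $\gamma_2$.} Lemma \ref{lem-2.1} gives $|\gamma_1|\le(1-\alpha)/2$ immediately. With $x=|c_1|$ and $|c_2|\le1-x^2$,
\begin{align*}
|\gamma_2|\le\frac{1-\alpha}{12}\left(2+(1-\alpha)x^2\right)\le\frac{(1-\alpha)(3-\alpha)}{12}.
\end{align*}
Adding the squares gives $\frac{(1-\alpha)^2}{144}\left(36+(3-\alpha)^2\right)=\frac{(1-\alpha)^2}{144}(45-6\alpha+\alpha^2)$, which is the first claimed bound.

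\textbf{Step 3: bound for $\gamma_3$ (main obstacle).} For the second inequality it suffices to show
\begin{align*}
|\gamma_3|\le\frac{(1-\alpha)(2-\alpha)}{12},
\end{align*}
because $(3-\alpha)^2+(2-\alpha)^2=13-10\alpha+2\alpha^2$. Computing $a_4$ and hence $(\mu,\nu)$ is routine but error-prone. I expect $(\mu,\nu)$ to lie in one of the regions $D_3,\dots,D_7$ of Lemma \ref{lem-2.2}, giving $\Phi=|\nu|$. The value $|\nu|/K$ should then equal $(2-\alpha)/12$; this is consistent with the extremal case $\omega(z)=iz$, where only the $c_1^3$ term survives. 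The genuine difficulty is confirming that $(\mu(\alpha),\nu(\alpha))$ stays in a single region for every $\alpha\in[0,1)$. If it crosses a boundary, I would split the $\alpha$-interval accordingly. If $\Phi=1$ occurs on some subinterval, I would instead bound $|\gamma_3|$ directly with Lemma \ref{lem-2.1} as a polynomial in $x=|c_1|$ and maximize over $[0,1]$.

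\textbf{Step 4: sharpness.} Take $\omega(z)=iz$, so $c_1=i$ and $c_2=c_3=0$; this is the convex analogue of $h_1$ in \eqref{Eq-3.3}. Then
\begin{align*}
\gamma_1^2=-\frac{(1-\alpha)^2}{4},\qquad \gamma_2=-\frac{(1-\alpha)(3-\alpha)}{12},
\end{align*}
and $\gamma_3$ is purely imaginary of modulus $(1-\alpha)(2-\alpha)/12$. Hence $\gamma_1^2,\gamma_2^2$ are real with opposite signs, and so are $\gamma_2^2,\gamma_3^2$. Therefore $|\gamma_1^2-\gamma_2^2|=|\gamma_1|^2+|\gamma_2|^2$ and $|\gamma_2^2-\gamma_3^2|=|\gamma_2|^2+|\gamma_3|^2$, and both inequalities become equalities for this function.
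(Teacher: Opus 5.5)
Your route is the paper's: the same representation $1+zf''/f'=(1-\alpha)\frac{1+\omega}{1-\omega}+\alpha$, the same estimates $|T_{2,1}(F_f/2)|\le|\gamma_1|^2+|\gamma_2|^2$ and $|T_{2,2}(F_f/2)|\le|\gamma_2|^2+|\gamma_3|^2$, and the same bounds for $\gamma_1,\gamma_2$. The extremal function is also the same: your $\omega(z)=iz$ produces the paper's $h_2$, with $h_2'(z)=(1-iz)^{-2(1-\alpha)}$. Steps 1, 2 and 4 are correct as written, including the sign argument that makes both triangle inequalities equalities.

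The one real gap is Step 3. You never compute $K,\mu,\nu$, and you only \emph{expect} $(\mu,\nu)$ to lie in a region where $\Phi=|\nu|$. That step is the entire content of the second inequality, so that half is a sketch rather than a proof. It does close exactly as you predict, with no splitting of the $\alpha$-interval. The $z^3$ comparison gives
\[
a_4=\frac{1-\alpha}{6}\bigl((6-7\alpha+2\alpha^2)c_1^3+(5-3\alpha)c_1c_2+c_3\bigr),\qquad \gamma_3=\frac{1-\alpha}{12}\bigl(c_3+(3-\alpha)c_1c_2+(2-\alpha)c_1^3\bigr),
\]
so $K=12$, $\mu=3-\alpha\in(2,3]$ and $\nu=2-\alpha$. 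Since $12\nu-(\mu^2+8)=(1-\alpha)(7+\alpha)\ge 0$, the point $(\mu,\nu)$ lies in $D_6$ for every $\alpha\in[0,1)$. Lemma \ref{lem-2.2} then gives $\Phi(\mu,\nu)=2-\alpha$, hence $|\gamma_3|\le(1-\alpha)(2-\alpha)/12$.

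Your contingency plan would not have worked had it been needed. Estimating $|c_3|+\mu|c_1||c_2|+\nu|c_1|^3$ through Lemma \ref{lem-2.1} discards the phase cancellation and is not sharp here. For $\alpha=0$ the resulting majorant is $1-x^2-\frac{y^2}{1+x}+3xy+2x^3$, with $x=|c_1|$ and $y=|c_2|\le 1-x^2$. At $y=1-x^2$ it equals $4x-2x^3$, which reaches $8\sqrt{6}/9>2$ at $x=\sqrt{2/3}$. So the sharp Prokhorov--Szynal estimate is genuinely required in Step 3. In any case, $\Phi=1$ can never occur: taking $\omega(z)=z$ shows $\Phi(\mu,\nu)\ge|\nu|=2-\alpha>1$.
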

\begin{proof}
Let $f\in\mathcal{C}(\alpha)$ be of the form $ f(z)=z +\sum_{n=2}^{\infty} a_n z^n $, $z\in \mathbb{D}$. By \eqref{eq-1.5}, we have
\begin{align}\label{eq-3.3}
	1+\frac{zf^{\prime\prime}(z)}{f^{\prime}(z)}=(1-\alpha)\frac{1+\omega(z)}{1-\omega(z)} +\alpha
\end{align}
for some $\omega\in\mathcal{B}_0$ of the form \eqref{eq-2.1}. An elementary computation shows that
\begin{align*}
	1+ \frac{zf^{\prime\prime}(z)}{f^{\prime}(z)}=1 +2a_2 z &+(6a_3 -4a^2_{2})z^2 +(12a_4 -18a_2 a_3 +8a^3_{2})z^3 \\&+ (20a_5 -32a_2 a_4 -18a^2_{3} +48a_3 a^2_{2}-16a^4_{2})z^4 +\cdots.
\end{align*}
Moreover, we see that
\begin{align*}
	(1-\alpha)\frac{1+\omega(z)}{1-\omega(z)} +\alpha= 1 + 2(1-\alpha) c_1 z &+ 2(1-\alpha)(c^2_1 + c_2)z^2 \\&+ 2(1-\alpha)(c^3_1 + 2c_1 c_2 +c_3)z^3 +\cdots.
\end{align*}
By comparing the coefficients of powers of $z$ on both sides of \eqref{eq-3.3}, we get
\begin{align}\label{eq-3.4}
	\begin{cases}
		&a_2= (1-\alpha)c_1, \vspace{2mm}\\ &a_3=\dfrac{(1-\alpha)}{3} \left((3-2\alpha)c^2_1 +c_2\right), \vspace{2mm}\\ &a_4= \dfrac{(1-\alpha)}{6}\left((6-7\alpha +2\alpha^2)c^3_1 +(5-3\alpha) c_1 c_2 +c_3\right).
	\end{cases}
\end{align}
Using \eqref{eq-1.10} and \eqref{eq-3.4}, a simple computation shows that
\begin{align*}
	|\gamma_{1}|=\frac{1}{2}(1-\alpha)|c_1|\leq \frac{(1-\alpha)}{2}.
\end{align*}
Again, we have
\begin{align*}
	\gamma_{2}= \frac{(1-\alpha)}{12}\left((3-\alpha)c^2_1 +2c_2\right),
\end{align*}
\textit{i.e.},
\begin{align*}
	|\gamma_{2}|&\leq \frac{(1-\alpha)}{12}\left((3-\alpha)|c_1|^2 +2|c_2|\right)\\&\leq \frac{(1-\alpha)}{12}\left((1-\alpha)|c_1|^2 +2\right) \\& \leq \frac{(1-\alpha)(3-\alpha)}{12}.
\end{align*}
In light of \eqref{eq-3.4} and Lemma \ref{lem-2.2}, we determine
\begin{align*}
	\gamma_{3} =\frac{1}{2}a_{4}- \frac{1}{2}a_{2}a_{3} +\frac{1}{6}a^3_{2}=\frac{(1-\alpha)}{12}\left((2-\alpha)c^3_1 +(3-\alpha) c_1 c_2 +c_3\right)
\end{align*}
\textit{i.e.},
\begin{align*}
	|\gamma_{3}|\leq \frac{(1-\alpha)}{12} |c_3 +(3-\alpha) c_1 c_2 +(2-\alpha)c^3_1| \leq \frac{(1-\alpha)(2-\alpha)}{12}.
\end{align*}
Thus, from \eqref{eq-1.15} and \eqref{eq-1.16}, we establish the following
\begin{align*}
	|T_{2,1}(F_{f}/2)|\leq|\gamma_{1}|^2 +|\gamma_{2}|^2 \leq \frac{(1-\alpha)^2}{144}(45-6\alpha+\alpha^2)
\end{align*}
and
\begin{align*}
	|T_{2,2}(F_{f}/2)|\leq|\gamma_{2}|^2 +|\gamma_{3}|^2 \leq \frac{(1-\alpha)^2}{144}(13-10\alpha+2\alpha^2).
\end{align*}
Thus, the required inequalities are obtained. In order to show the sharpness, we consider the function $h_2\in\mathcal{C}(\alpha)$ of the form
\begin{align*}
h_2(z) &=\frac{1}{2\alpha-1}\left((1-iz)^{2\alpha-1}-1\right) i \\
&= z+i(1-\alpha)z^2-\frac{(1-\alpha)(3-2\alpha)}{3}z^3-i\frac{(1-\alpha)(6-7\alpha +2\alpha^2)}{6}z^4+\ldots
\end{align*}

satisfying
\begin{align}\label{Eq-3.6}
	1+\frac{zh_2^{\prime\prime}(z)}{h_2^{\prime}(z)} =(1-\alpha)\frac{1+iz}{1-iz} +\alpha
\end{align}
for which 
\begin{align*}
	|T_{2,1}(F_{f}/2)|=\frac{(1-\alpha)^2}{144}(45-6\alpha+\alpha^2) 
\end{align*}
and
\begin{align*}
	|T_{2,2}(F_{f}/2)|= \frac{(1-\alpha)^2}{144}(13-10\alpha +2\alpha^2).
\end{align*}
This completes the proof.
\end{proof}

We obtain the following corollary for the class $\mathcal{C}$.
\begin{cor}
If $ f(z)=z+a_2z^2+a_3z^3+\cdots\in \mathcal{C} $, then we have
\begin{align*}
	|T_{2,1}(F_{f}/2)|\leq\frac{5}{16} \hspace{0.5cm}\mbox{and} \hspace{0.5cm} |T_{2,2}(F_{f}/2)|\leq\frac{13}{144}.
\end{align*}
The inequalities are sharp.
\end{cor}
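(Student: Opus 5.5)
Setting $\alpha=0$ in Theorem \ref{th-3.3} gives the corollary directly, so the plan is to specialize and then confirm the constants and the sharpness. With $\alpha=0$ we have $(1-\alpha)^2=1$, $45-6\alpha+\alpha^2=45$ and $13-10\alpha+2\alpha^2=13$. Theorem \ref{th-3.3} therefore yields
\begin{align*}
	|T_{2,1}(F_{f}/2)|\leq\frac{45}{144}=\frac{5}{16}, \qquad |T_{2,2}(F_{f}/2)|\leq\frac{13}{144},
\end{align*}
since $\mathcal{C}(0)=\mathcal{C}$.

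As a sanity check, I would retrace the ingredients at $\alpha=0$ directly. From \eqref{eq-3.4} we get $\gamma_1=c_1/2$, $\gamma_2=(3c_1^2+2c_2)/12$ and $\gamma_3=(2c_1^3+3c_1c_2+c_3)/12$. Lemma \ref{lem-2.1} bounds $|\gamma_1|$ by $1/2$ and $|\gamma_2|$ by $(|c_1|^2+2)/12\leq 1/4$. Lemma \ref{lem-2.2}, applied with $(\mu,\nu)=(3,2)$, lies in $D_6$ because $2\geq(9+8)/12$, so $\Phi=|\nu|=2$ and $|\gamma_3|\leq 1/6$. Then, using $|\gamma_1^2-\gamma_2^2|\leq|\gamma_1|^2+|\gamma_2|^2$, the bounds are $1/4+1/16=5/16$ and $1/16+1/36=13/144$.

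For sharpness I would use $h_2$ at $\alpha=0$, namely $h_2(z)=-i\big((1-iz)^{-1}-1\big)=z/(1-iz)$. Its logarithmic series is $\log(h_2(z)/z)=-\log(1-iz)=\sum (iz)^n/n$, so $\gamma_1=i/2$, $\gamma_2=-1/4$ and $\gamma_3=-i/6$. This gives $T_{2,1}=\gamma_1^2-\gamma_2^2=-1/4-1/16=-5/16$ and $T_{2,2}=\gamma_2^2-\gamma_3^2=1/16+1/36=13/144$, so equality holds in both cases.

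The only step needing care is this sharpness check. It works because the factor $i$ makes $\gamma_1^2$ and $\gamma_2^2$ have opposite signs, and likewise $\gamma_2^2$ and $\gamma_3^2$, so the triangle inequality becomes an equality. Everything else is direct substitution into Theorem \ref{th-3.3}.
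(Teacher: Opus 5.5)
Your proposal is correct and follows the paper's route: the corollary is exactly Theorem \ref{th-3.3} at $\alpha=0$, and your direct check (the bound on $\gamma_3$ via Lemma \ref{lem-2.2} with $(\mu,\nu)=(3,2)\in D_6$) is the paper's own argument specialized to $\alpha=0$. Your sharpness check with the paper's extremal $h_2(z)=z/(1-iz)$, for which $\gamma_n=i^n/(2n)$, is also the paper's.
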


In the next theorem, we prove the sharp bounds for $|T_{2,1}(F_{f^{-1}}/2)|$ associated with the logarithmic coefficients of inverse functions of functions in the class $\mathcal{C}(\alpha)$.
\begin{thm}\label{th-3.4}
Let $0\leq\alpha <1$. If $f(z)=z+a_2z^2+a_3z^3+\cdots\in \mathcal{C}(\alpha)$. Then 
\begin{align*}
	|T_{2,1}(F_{f^{-1}}/2)|\leq \dfrac{5(1-\alpha)^2}{144}(9+6\alpha -5\alpha^2).
\end{align*}
The inequality is sharp.
\end{thm}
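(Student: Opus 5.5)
The plan is to follow the scheme of Theorem~\ref{th-3.2}, now using the coefficient formulas \eqref{eq-3.4} for $\mathcal{C}(\alpha)$. Substituting \eqref{eq-3.4} into \eqref{eq-1.14} gives
\begin{align*}
\Gamma_1=-\frac{(1-\alpha)}{2}c_1,\qquad \Gamma_2=-\frac{(1-\alpha)}{12}\left((5\alpha-3)c_1^2+2c_2\right).
\end{align*}
Hence, by \eqref{eq-1.15},
\begin{align*}
T_{2,1}(F_{f^{-1}}/2)=\Gamma_1^2-\Gamma_2^2=\frac{(1-\alpha)^2}{144}\left(36c_1^2-\left((5\alpha-3)c_1^2+2c_2\right)^2\right).
\end{align*}
I would redo this algebra carefully. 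The target constant depends entirely on the exact coefficient of $c_1^2$ in $\Gamma_2$, and any slip there changes the final polynomial in $\alpha$.

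Next, put $x=|c_1|\in[0,1]$ and apply the triangle inequality together with $|c_2|\le 1-x^2$ from Lemma~\ref{lem-2.1}. This gives
\begin{align*}
|T_{2,1}(F_{f^{-1}}/2)|\le\frac{(1-\alpha)^2}{144}\,G(x),\qquad G(x)=36x^2+\left(|5\alpha-3|x^2+2(1-x^2)\right)^2.
\end{align*}
As in Theorem~\ref{th-3.2}, I would split into the cases $0\le\alpha\le 3/5$ and $3/5<\alpha<1$ so that the absolute value becomes linear in $\alpha$. In each case $G$ is a quadratic polynomial in $t=x^2$. Its derivative $72x+4x(|5\alpha-3|-2)\left(|5\alpha-3|x^2+2(1-x^2)\right)$ is nonnegative on $[0,1]$, because the negative part is at most $8x\cdot 2=16x<72x$. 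So $G$ is increasing and $\max G=G(1)=36+(5\alpha-3)^2$. It then remains to compare $G(1)$ with $5(9+6\alpha-5\alpha^2)=45+30\alpha-25\alpha^2$. The difference is $10\alpha(6-5\alpha)\ge 0$, so the stated inequality follows.

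Sharpness is the step I expect to be the main obstacle. I would test the extremal function $h_2$ of \eqref{Eq-3.6}, for which $c_1=i$ and $c_2=c_3=0$. With these values the expression above gives exactly $\frac{(1-\alpha)^2}{144}\left(36+(5\alpha-3)^2\right)$, which coincides with the stated constant only at $\alpha=0$. So before asserting sharpness for all $\alpha$, I would first re-verify $\Gamma_2$ directly from the expansion of $h_2$, using $a_2=i(1-\alpha)$ and $a_3=-(1-\alpha)(3-2\alpha)/3$. I would also check whether a different choice of $(c_1,c_2)$, for instance with $x<1$ and $c_2$ rotated to align the two terms, can reach $45+30\alpha-25\alpha^2$. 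If neither works, the estimate above still proves the stated inequality, but equality holds only in the case $\alpha=0$, which is the class $\mathcal{C}$.
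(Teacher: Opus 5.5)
Your argument is the paper's own. It uses the same $\Gamma_1,\Gamma_2$, the triangle inequality with $|c_2|\le 1-|c_1|^2$, monotonicity of the resulting quartic in $x=|c_1|$ (your single derivative estimate covers both of the paper's cases $\alpha\le 3/5$ and $\alpha>3/5$ at once), and $h_2$ as the candidate extremal function.

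Your suspicion about sharpness is correct, and it needs no further search over $(c_1,c_2)$. The bound you derived, $|T_{2,1}(F_{f^{-1}}/2)|\le\frac{(1-\alpha)^2}{144}\bigl(36+(5\alpha-3)^2\bigr)=\frac{5(1-\alpha)^2}{144}(9-6\alpha+5\alpha^2)$, holds for every $f\in\mathcal{C}(\alpha)$. It is attained by $h_2$ (where $c_1=i$, $c_2=0$), so it is the sharp constant. The stated constant exceeds it by $\frac{(1-\alpha)^2}{144}\,10\alpha(6-5\alpha)>0$ for $0<\alpha<1$, so no function can attain the stated value there.

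The paper's own proof in fact arrives at $9-6\alpha+5\alpha^2$ when it evaluates $g_j(1)$. The $9+6\alpha-5\alpha^2$ in the statement, and in the paper's sharpness line for $h_2$, is therefore a sign slip. The inequality as stated is true, but it is sharp only at $\alpha=0$.
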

\begin{proof}
Considering \eqref{eq-1.14} and \eqref{eq-3.4}, we deduce
\begin{align*}
	\Gamma_1=-\frac{1}{2}(1-\alpha)c_1 \hspace{0.5cm}\mbox{and}\hspace{0.5cm} \Gamma_2= \frac{(1-\alpha)}{12}\left((3-5\alpha)c^2_1 -2c_2\right).
\end{align*}
From \eqref{eq-1.15}, it is easy to see that
\begin{align*}
	T_{2,1}(F_{f^{-1}}/2)=\dfrac{(1-\alpha)^2}{144}\left( -(3-5\alpha)^2 c^4_1 -4c^2_2 +36c^2_1 +4(3-5\alpha)c^2_1 c_2\right).
\end{align*}
Set $|c_1|=x$. By applying the triangle inequality and Lemma \ref{lem-2.1}, it follows that
\begin{align*}
	|T_{2,1}(F_{f^{-1}}/2)|\leq \dfrac{(1- \alpha)^2}{144} \left((3-5\alpha)^2 x^4 +4(1-x^2)^2 +36x^2 +4|3-5\alpha|x^2 (1-x^2)\right).
\end{align*}
Now, we consider two cases:\\
\noindent{\bf Case I:} For $0\leq\alpha\leq\frac{3}{5}$, we have
\begin{align*}
	|T_{2,1}(F_{f^{-1}}/2)|&\leq \dfrac{(1- \alpha)^2}{144} \left( 4+20(2-\alpha)x^2 +(1-5\alpha)^2 x^4\right)\\&:=\dfrac{(1- \alpha)^2}{144}g_3(x).
\end{align*}
\noindent{\bf Case II:} For $\frac{3}{5}<\alpha<1$, we see that
\begin{align*}
	|T_{2,1}(F_{f^{-1}}/2)|&\leq \dfrac{(1- \alpha)^2}{144} \left( 4+4(4+5\alpha)x^2 +25(1-\alpha)^2 x^4\right)\\&:=\dfrac{(1- \alpha)^2}{4}g_4(x).
\end{align*}
In both cases functions $g_j(x)$ for $j=3,4$ are increasing with respect to $x$ in $0\leq x \leq 1$. Hence, the following is derived
\begin{align*}
	|T_{2,1}(F_{f^{-1}}/2)|&\leq \dfrac{(1-\alpha)^2}{144} g_j(1) \\&= \dfrac{5(1-\alpha)^2}{144}(9-6\alpha +5\alpha^2).
\end{align*}
This is the required inequality. To show the sharpness, we employ the function $h_2\in\mathcal{C}(\alpha)$ satisfying the condition \eqref{Eq-3.6}. It can be easily shown that
\begin{align*}
	|T_{2,1}(F_{f^{-1}}/2)|= \frac{5(1-\alpha)^2}{144}(9+6\alpha -5\alpha^2).
\end{align*}
This completes the proof.
\end{proof}

Consequently, we have the following corollary for the class $\mathcal{C}$.
\begin{cor}
If $f(z)=z+a_2z^2+a_3z^3+\cdots\in \mathcal{C}$, then we have
\begin{align*}
	|T_{2,1}(F_{f^{-1}}/2)|\leq\frac{5}{16}.
\end{align*}
The inequality is sharp.
\end{cor}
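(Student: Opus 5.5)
The corollary is the specialization $\alpha=0$ of Theorem \ref{th-3.4}, since $\mathcal{C}(0)=\mathcal{C}$. My plan is therefore to substitute $\alpha=0$ into the bound of Theorem \ref{th-3.4}:
\begin{align*}
\frac{5(1-\alpha)^2}{144}(9+6\alpha-5\alpha^2)\Big|_{\alpha=0}=\frac{45}{144}=\frac{5}{16}.
\end{align*}
The proof of Theorem \ref{th-3.4} writes the final expression once with $9+6\alpha$ and once with $9-6\alpha$. This inconsistency is harmless here, because both forms give $45$ at $\alpha=0$.

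For safety I would also verify the inequality directly with $\alpha=0$, so that the result does not depend on the case analysis in Theorem \ref{th-3.4}. From \eqref{eq-3.4} with $\alpha=0$ we get $a_2=c_1$ and $a_3=(3c_1^2+c_2)/3$. By \eqref{eq-1.14} this gives
\begin{align*}
\Gamma_1=-\tfrac12 c_1,\qquad \Gamma_2=\tfrac1{12}(3c_1^2-2c_2).
\end{align*}
Hence
\begin{align*}
T_{2,1}(F_{f^{-1}}/2)=\Gamma_1^2-\Gamma_2^2=\frac{1}{144}\left(36c_1^2-9c_1^4-4c_2^2+12c_1^2c_2\right).
\end{align*}
Put $x=|c_1|\in[0,1]$. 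The triangle inequality together with Lemma \ref{lem-2.1} (that is, $|c_2|\le 1-x^2$) gives
\begin{align*}
144\,|T_{2,1}(F_{f^{-1}}/2)|\le 36x^2+9x^4+4(1-x^2)^2+12x^2(1-x^2)=4+40x^2+x^4.
\end{align*}
The right-hand side is increasing on $[0,1]$, so its maximum is at $x=1$, where it equals $45$. This yields the bound $5/16$.

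For sharpness I would take $h_2$ with $\alpha=0$, namely $h_2(z)=z/(1-iz)$, the rotated half-plane map. It satisfies \eqref{Eq-3.6} with $\omega(z)=iz$, so it lies in $\mathcal{C}$. Its coefficients are $a_2=i$ and $a_3=-1$. Then $\Gamma_1=-i/2$ and $\Gamma_2=-\tfrac12\left(-1+\tfrac32\right)=-\tfrac14$, so
\begin{align*}
T_{2,1}(F_{f^{-1}}/2)=-\tfrac14-\tfrac1{16}=-\tfrac5{16},
\end{align*}
and equality holds. There is no real obstacle. The only points needing care are checking that the right-hand side of the estimate is monotone in $x$ and confirming the arithmetic for the extremal function.
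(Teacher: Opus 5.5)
Your proposal is correct and follows the paper's route. The corollary is the case $\alpha=0$ of Theorem~\ref{th-3.4}, and your direct check reproduces that theorem's argument at $\alpha=0$: Carlson's bound $|c_2|\le 1-x^2$, the increasing majorant $4+40x^2+x^4$, and the extremal function $h_2(z)=z/(1-iz)$, which gives $T_{2,1}(F_{f^{-1}}/2)=-5/16$.

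You are also right that the $9+6\alpha-5\alpha^2$ versus $9-6\alpha+5\alpha^2$ discrepancy is harmless here, since both equal $9$ at $\alpha=0$. In fact $9-6\alpha+5\alpha^2$ is what $g_3(1)$ and $h_2$ actually produce.
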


In the following theorem, we determine the sharp bounds for $|T_{2,1}(F_{f}/2)|$ for functions in the class $\mathcal{S}^*_{\alpha}$.
\begin{thm}\label{th-3.5}
Suppose that $0<\alpha\leq 1$. If $ f(z)=z+a_2z^2+a_3z^3+\cdots\in \mathcal{S}^*_{\alpha}$. Then 
\begin{align*}
	|T_{2,1}(F_{f}/2)|\leq\dfrac{\alpha^2(4+\alpha^2)}{4}.
\end{align*}
The inequality is sharp.
\end{thm}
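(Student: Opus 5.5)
Following the pattern of Theorem \ref{th-3.1}, I will represent $f\in\mathcal{S}^*_{\alpha}$ by the subordination
\begin{align*}
	\frac{zf^{\prime}(z)}{f(z)}=\left(\frac{1+\omega(z)}{1-\omega(z)}\right)^{\alpha},\qquad \omega\in\mathcal{B}_0,
\end{align*}
which encodes condition \eqref{eq-1.6}. Expanding the right-hand side in powers of $z$ gives
\begin{align*}
	\left(\frac{1+\omega}{1-\omega}\right)^{\alpha}=1+2\alpha c_1 z+\left(2\alpha c_2+2\alpha^2c_1^2\right)z^2+\cdots,
\end{align*}
using $\left(\frac{1+u}{1-u}\right)^{\alpha}=1+2\alpha u+2\alpha^2u^2+\cdots$. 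I will then compare this with the expansion of $zf'/f$ already written in the proof of Theorem \ref{th-3.1}. This yields $a_2=2\alpha c_1$ and $a_3=\alpha c_2+\tfrac{1}{2}(2\alpha^2+4\alpha^2)c_1^2=\alpha c_2+3\alpha^2c_1^2$.

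Via \eqref{eq-1.10}, these coefficients give
\begin{align*}
	\gamma_1=\alpha c_1,\qquad \gamma_2=\tfrac12 a_3-\tfrac14 a_2^2=\tfrac{\alpha}{2}\left(c_2+\alpha c_1^2\right).
\end{align*}
As a check, $\gamma_2$ is half the $z^2$ coefficient of $\log\left((1+\omega)/(1-\omega)\right)^{\alpha}=2\alpha(\omega+\omega^3/3+\cdots)$ divided by $2$. That series has $z^2$ coefficient $2\alpha c_2$, which would give $\gamma_2=\alpha c_2/2$ and contradict the formula above. I must therefore verify the expansion carefully. Since $\log(zf'/f)$ is not $\log(f/z)$, I will instead use $z(\log f/z)'=zf'/f-1$. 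This gives $2\gamma_1=2\alpha c_1$, while $4\gamma_2$ equals the $z^2$ coefficient $2\alpha c_2+2\alpha^2c_1^2$, so $\gamma_2=\frac{\alpha}{2}(c_2+\alpha c_1^2)$, consistent with the first computation.

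Next I will bound the determinant. We have
\begin{align*}
	T_{2,1}=\gamma_1^2-\gamma_2^2,\qquad |T_{2,1}|\le |\gamma_1|^2+|\gamma_2|^2 .
\end{align*}
Set $x=|c_1|$ and apply Lemma \ref{lem-2.1}. This gives $|\gamma_2|\le\frac{\alpha}{2}\left(1-x^2+\alpha x^2\right)\le\frac{\alpha}{2}$, since $\alpha\le1$. Hence
\begin{align*}
	|T_{2,1}(F_f/2)|\le \alpha^2+\frac{\alpha^2}{4}\cdot g(x),
\end{align*}
where I will handle the coupling in $x$ by noting that each term is maximized at $x=1$. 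Both $\alpha^2x^2$ and $(1-(1-\alpha)x^2)^2$ need checking there. Using the precise expression $\alpha^2x^2+\frac{\alpha^2}{4}\left(1-(1-\alpha)x^2\right)^2$, the derivative in $t=x^2$ is $\alpha^2-\frac{\alpha^2}{2}(1-\alpha)(1-(1-\alpha)t)\ge\alpha^2\left(1-\tfrac12\right)>0$. So the expression is increasing, and its maximum at $t=1$ is $\alpha^2+\frac{\alpha^4}{4}=\frac{\alpha^2(4+\alpha^2)}{4}$.

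For sharpness I will take $\omega(z)=iz$, that is, $f$ defined by $zf'/f=\left(\frac{1+iz}{1-iz}\right)^{\alpha}$. Then $c_1=i$ and $c_2=0$, so $\gamma_1=i\alpha$ and $\gamma_2=-\alpha^2/2$. This gives $\gamma_1^2-\gamma_2^2=-\alpha^2-\alpha^4/4$, attaining the bound. The rotation by $i$ is essential, because it makes $\gamma_1^2$ and $-\gamma_2^2$ have the same sign. The main obstacle is getting the $c_1^2$ coefficient of $a_3$ right for the fractional power; everything else is routine monotonicity in $x$.
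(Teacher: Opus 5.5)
Your proof is correct and is essentially the paper's argument: the same coefficient formulas $\gamma_1=\alpha c_1$ and $\gamma_2=\frac{\alpha}{2}(c_2+\alpha c_1^2)$, the same bound $|c_2|\le 1-|c_1|^2$, and the same extremal choice $\omega(z)=iz$; your expression $\alpha^2x^2+\frac{\alpha^2}{4}\bigl(1-(1-\alpha)x^2\bigr)^2$ expands to exactly the paper's $\frac{\alpha^2}{4}\bigl(1+2(1+\alpha)x^2+(1-\alpha)^2x^4\bigr)$. In a final write-up, drop the retracted false start in your ``check'' paragraph and the stray line containing the undefined $g(x)$.
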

\begin{proof}
Let $f\in\mathcal{S}^*_{\alpha}$ be of the form $ f(z)=z +\sum_{n=2}^{\infty} a_n z^n $, $z\in \mathbb{D}$. By \eqref{eq-1.6}, we have
\begin{align}\label{eq-3.5}
	\frac{zf^{\prime}(z)}{f(z)}=\left(\frac{1+\omega(z)}{1-\omega(z)}\right)^\alpha
\end{align}
for some $\omega\in\mathcal{B}_0$ of the form \eqref{eq-2.1}. A routine computation shows that
\begin{align*}
	\frac{zf^{\prime}(z)}{f(z)}= 1+a_2 z +(-a^2_{2} +2a_3)z^2 &+(a^3_{2} -3a_2 a_3 +3a_4)z^3 \\& +(-a^4_{2} +4a^2_{2}a_3 -2a^2_{3} -4a_2 a_4 +4a_5)z^4 +\cdots.
\end{align*}
Furthermore, we have
\begin{align*}
	\left(\frac{1+\omega(z)}{1-\omega(z)}\right)^\alpha= 1 + 2\alpha c_1 z &+ 2(\alpha^2 c^2_1 + \alpha c_2)z^2 \\&+ \frac{2}{3}(\alpha c^3_1 +2\alpha^3 c^3_1+ 6\alpha^2 c_1 c_2 +3\alpha c_3)z^3 +\cdots.
\end{align*}
By comparing the coefficients of like powers of $z$ from both side of \eqref{eq-3.5}, we get
\begin{align}\label{eq-3.6}
	\begin{cases}
		&a_2= 2\alpha c_1, \vspace{2mm}\\ &a_3=\alpha \left(3\alpha c^2_1 +c_2\right), \vspace{2mm}\\ &a_4=\dfrac{2\alpha}{9} \left((1+17\alpha^2)c^3_1 +15\alpha c_1 c_2 +3c_3\right).
	\end{cases}
\end{align}
Using \eqref{eq-1.10} and \eqref{eq-3.6}, we obtain
\begin{align*}
	\gamma_{1}=\alpha c_1 \hspace{0.5cm}\mbox{and}\hspace{0.5cm} \gamma_{2}= \frac{\alpha}{2}\left(\alpha c^2_1 +c_2\right).
\end{align*}
From \eqref{eq-1.15}, we have
\begin{align*}
	T_{2,1}(F_{f}/2)=-\dfrac{\alpha^2}{4}\left(\alpha^2 c^4_1 +c^2_2 -4c^2_1 +2\alpha c^2_1 c_2\right).
\end{align*}
Assume that $|c_1|=x$. Using the triangle inequality along with Lemma \ref{lem-2.1}, we proceed as follows
\begin{align*}
	|T_{2,1}(F_{f}/2)|&\leq \dfrac{\alpha^2}{4} \left(\alpha^2 x^4 +(1-x^2)^2 +4x^2 +2\alpha x^2(1-x^2)\right)\\&= \dfrac{\alpha^2}{4} \left(1+ 2(1+\alpha)x^2 +(1-\alpha)^2 x^4\right) \\&:= \dfrac{\alpha^2}{4} g_5(x).
\end{align*}
The function $g_5(x)$ is increasing with respect to $x$ in $0\leq x \leq 1$. Hence, we obtain the following
\begin{align*}
	|T_{2,1}(F_{f}/2)|&\leq \dfrac{\alpha^2}{4} g_5(1) = \dfrac{\alpha^2(4+\alpha^2)}{4}.
\end{align*}
The desired inequality is obtained. To establish the sharpness of the above inequalities, we examine the function $h_3\in\mathcal{S}^*_{\alpha}$ such that
\begin{align}\label{Eq-3.9}
	\frac{zh_3^{\prime}(z)}{h_3(z)}=\left(\frac{1+iz}{1-iz}\right)^\alpha.
\end{align}
for which $a_2=2i\alpha$ and $a_3=-3\alpha^2$. A simple computation shows that 
\begin{align*}
    |T_{2,1}(F_{f}/2)|= \frac{\alpha^2(4+\alpha^2)}{4}.
\end{align*}
This completes the proof.
\end{proof}

We establish the sharp bound for $|T_{2,1}(F_{f^{-1}}/2)|$, which are linked to the logarithmic coefficients of inverse functions of functions in the class $\mathcal{S}^*_{\alpha}$.
\begin{thm}\label{th-3.6}
Let $0<\alpha\leq 1$. If $ f(z)=z+a_2z^2+a_3z^3+\cdots\in \mathcal{S}^*_{\alpha}$. Then 
\begin{align*}
	|T_{2,1}(F_{f^{-1}}/2)|\leq \dfrac{\alpha^2(4 +9\alpha^2)}{4}.
\end{align*}
The inequality is sharp.
\end{thm}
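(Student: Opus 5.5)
Following the pattern of Theorems \ref{th-3.2} and \ref{th-3.4}, I would start from the coefficient representation \eqref{eq-3.6} for $f\in\mathcal{S}^*_{\alpha}$ and substitute it into \eqref{eq-1.14}. This gives
\begin{align*}
	\Gamma_1=-\alpha c_1, \qquad \Gamma_2=-\frac{1}{2}\left(a_3-\frac{3}{2}a_2^2\right)=-\frac{\alpha}{2}\left(3\alpha c_1^2+c_2-6\alpha c_1^2\right)=\frac{\alpha}{2}\left(3\alpha c_1^2-c_2\right).
\end{align*}
Then, by \eqref{eq-1.15},
\begin{align*}
	T_{2,1}(F_{f^{-1}}/2)=\Gamma_1^2-\Gamma_2^2=\frac{\alpha^2}{4}\left(4c_1^2-9\alpha^2c_1^4+6\alpha c_1^2c_2-c_2^2\right).
\end{align*}

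Next I would set $x=|c_1|\in[0,1]$ and apply the triangle inequality together with $|c_2|\le 1-x^2$ from Lemma \ref{lem-2.1}. This yields
\begin{align*}
	|T_{2,1}(F_{f^{-1}}/2)|\le\frac{\alpha^2}{4}\left(4x^2+9\alpha^2x^4+6\alpha x^2(1-x^2)+(1-x^2)^2\right)
	=\frac{\alpha^2}{4}\left(1+(2+6\alpha)x^2+(3\alpha-1)^2x^4\right).
\end{align*}
Call the bracket $g_6(x)$. Since $\alpha>0$, no case split on the sign of the coefficient is needed, in contrast to Theorem \ref{th-3.2}. All coefficients of $g_6$ are nonnegative, so $g_6$ is increasing on $[0,1]$. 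Hence
\begin{align*}
	|T_{2,1}(F_{f^{-1}}/2)|\le \frac{\alpha^2}{4}\,g_6(1)=\frac{\alpha^2}{4}\left(1+2+6\alpha+9\alpha^2-6\alpha+1\right)=\frac{\alpha^2(4+9\alpha^2)}{4}.
\end{align*}

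For sharpness I would use $h_3$ from \eqref{Eq-3.9}, which corresponds to $\omega(z)=iz$, so $c_1=i$ and $c_2=0$. Then $c_1^2=-1$, $c_1^4=1$, and the exact expression for the determinant becomes $\frac{\alpha^2}{4}(-4-9\alpha^2)$, whose modulus is exactly the bound. Equivalently, with $a_2=2i\alpha$ and $a_3=-3\alpha^2$ one gets $\Gamma_1=-i\alpha$, $\Gamma_2=-\tfrac{3}{2}\alpha^2$, and $|\Gamma_1^2-\Gamma_2^2|=\alpha^2+\tfrac{9}{4}\alpha^4$.

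The only real work is getting the expression for $\Gamma_2$ right. The rest is monotonicity plus checking that all the triangle inequalities become equalities at $c_1=i$, $c_2=0$, which holds because every term in the bracket then has the same sign.
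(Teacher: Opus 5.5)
Your proposal is correct and follows essentially the same route as the paper. You compute $\Gamma_1,\Gamma_2$ from \eqref{eq-3.6}, obtain the same bracket $g_6(x)=1+2(1+3\alpha)x^2+(1-3\alpha)^2x^4$ via Lemma \ref{lem-2.1} and the triangle inequality, use monotonicity on $[0,1]$, and establish sharpness with $h_3$ ($c_1=i$, $c_2=0$). Your explicit justification of monotonicity (nonnegative coefficients) and of equality at the extremal function is a small but welcome addition.
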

\begin{proof}
In view of \eqref{eq-1.14} and \eqref{eq-3.6}, we obtain
\begin{align*}
	\Gamma_1=-\alpha c_1 \hspace{0.5cm}\mbox{and}\hspace{0.5cm} \Gamma_2= \frac{\alpha}{2}\left(3\alpha c^2_1 -c_2\right).
\end{align*}
From \eqref{eq-1.15}, we see that
\begin{align*}
	T_{2,1}(F_{f^{-1}}/2)=-\dfrac{\alpha^2}{4}\left(9\alpha^2 c^4_1 +c^2_2 -4c^2_1 -6\alpha c^2_1 c_2\right).
\end{align*}
Assume that $|c_1|=x$. Applying the triangle inequality and Lemma \ref{lem-2.1}, we obtain the following
\begin{align*}
	|T_{2,1}(F_{f^{-1}}/2)|&\leq \dfrac{\alpha^2}{4} \left(9\alpha^2 x^4 +(1-x^2)^2 +4x^2 +6\alpha x^2(1-x^2)\right)\\&= \dfrac{\alpha^2}{4} \left(1+ 2(1+3\alpha)x^2 +(1-3\alpha)^2 x^4\right) \\&:= \dfrac{\alpha^2}{4} g_6(x).
\end{align*}
The function $g_6(x)$ is increasing with respect to $x$ in $0\leq x \leq 1$. This yields the following result
\begin{align*}
	|T_{2,1}(F_{f^{-1}}/2)|&\leq \dfrac{\alpha^2}{4} g_6(1) = \dfrac{\alpha^2(4+9\alpha^2)}{4}.
\end{align*}
All the equalities hold for the function $h_3\in \mathcal{S}^*_{\alpha}$, which is defined in \eqref{Eq-3.9}.
Therefore, a simple computation leads to
\begin{align*}
   |T_{2,1}(F_{f^{-1}}/2)|= \frac{\alpha^2(4 +9\alpha^2)}{4}. 
\end{align*}
This completes the proof.
\end{proof}

The following theorem establishes the best possible bounds for $|T_{2,1}(F_{f}/2)|$ for functions belonging to $\mathcal{C}_{\alpha}$.
\begin{thm}\label{th-3.7}
Assume that $0<\alpha\leq 1$. If $ f(z)=z+a_2z^2+a_3z^3+\cdots\in \mathcal{C}_{\alpha} $. Then 
\begin{align*}
	|T_{2,1}(F_{f}/2)|\leq\dfrac{\alpha^2(4+\alpha^2)}{16}.
\end{align*}
The inequality is sharp.
\end{thm}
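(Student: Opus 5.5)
The plan follows the scheme of Theorem \ref{th-3.5}, replacing the starlike representation by the convex one. For $f\in\mathcal{C}_{\alpha}$, condition \eqref{eq-1.7} gives a Schwarz function $\omega\in\mathcal{B}_0$ of the form \eqref{eq-2.1} such that
\begin{align*}
1+\frac{zf^{\prime\prime}(z)}{f^{\prime}(z)}=\left(\frac{1+\omega(z)}{1-\omega(z)}\right)^{\alpha}.
\end{align*}

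\textbf{Coefficients.} We compare coefficients of $z$ and $z^2$. On the left we use the expansion $1+2a_2z+(6a_3-4a_2^2)z^2+\cdots$ from the proof of Theorem \ref{th-3.3}. On the right we use $1+2\alpha c_1 z+2(\alpha^2c_1^2+\alpha c_2)z^2+\cdots$ from the proof of Theorem \ref{th-3.5}. This gives
\begin{align*}
a_2=\alpha c_1,\qquad a_3=\frac{\alpha}{3}\left(3\alpha c_1^2+c_2\right).
\end{align*}
Substituting into \eqref{eq-1.10} yields
\begin{align*}
\gamma_1=\frac{\alpha}{2}c_1,\qquad \gamma_2=\frac{\alpha}{12}\left(3\alpha c_1^2+2c_2\right).
\end{align*}
Hence, by \eqref{eq-1.15},
\begin{align*}
T_{2,1}(F_{f}/2)=\gamma_1^2-\gamma_2^2=\frac{\alpha^2}{144}\left(36c_1^2-9\alpha^2c_1^4-4c_2^2-12\alpha c_1^2c_2\right).
\end{align*}

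\textbf{Bound.} Put $x=|c_1|\in[0,1]$ and apply the triangle inequality together with $|c_2|\le 1-x^2$ from Lemma \ref{lem-2.1}. This gives
\begin{align*}
|T_{2,1}(F_{f}/2)|\le\frac{\alpha^2}{144}\left(36x^2+9\alpha^2x^4+4(1-x^2)^2+12\alpha x^2(1-x^2)\right)=\frac{\alpha^2}{144}\left(4+(28+12\alpha)x^2+(3\alpha-2)^2x^4\right).
\end{align*}
Both the $x^2$ and $x^4$ coefficients are nonnegative, so the right-hand polynomial is increasing on $[0,1]$. Its value at $x=1$ is $36+9\alpha^2$, which gives
\begin{align*}
|T_{2,1}(F_{f}/2)|\le\frac{\alpha^2(36+9\alpha^2)}{144}=\frac{\alpha^2(4+\alpha^2)}{16}.
\end{align*}

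\textbf{Sharpness.} We take the function $h_4\in\mathcal{C}_{\alpha}$ defined by
\begin{align*}
1+\frac{zh_4^{\prime\prime}(z)}{h_4^{\prime}(z)}=\left(\frac{1+iz}{1-iz}\right)^{\alpha},
\end{align*}
that is, $\omega(z)=iz$, so $c_1=i$ and $c_2=0$. Then $a_2=i\alpha$ and $a_3=-\alpha^2$. The bracket in the formula for $T_{2,1}(F_f/2)$ equals $-36-9\alpha^2$, so equality holds.

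The only step needing care is the coefficient bookkeeping: the factor $3\alpha$ in $a_3$, and the sign of $c_1^2$, which ensures that every term is in phase for the extremal function. Once these are correct, the monotonicity argument is routine.
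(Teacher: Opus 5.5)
Your proposal is correct and follows essentially the same route as the paper. Both use the representation with $\left(\frac{1+\omega}{1-\omega}\right)^{\alpha}$, the formulas $a_2=\alpha c_1$ and $a_3=\frac{\alpha}{3}(3\alpha c_1^2+c_2)$, the triangle inequality with $|c_2|\le 1-|c_1|^2$ leading to the increasing polynomial $4+(28+12\alpha)x^2+(3\alpha-2)^2x^4$, and the extremal function $h_4$ with $\omega(z)=iz$.
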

\begin{proof}
Let $f\in\mathcal{C}_{\alpha}$ be of the form $ f(z)=z +\sum_{n=2}^{\infty} a_n z^n $, $z\in \mathbb{D}$. By \eqref{eq-1.7}, we have
\begin{align}\label{eq-3.7}
	1+\frac{zf^{\prime\prime}(z)}{f^{\prime}(z)}=\left(\frac{1+\omega(z)}{1-\omega(z)}\right)^\alpha
\end{align}
for some $\omega\in\mathcal{B}_0$ of the form \eqref{eq-2.1}. An elementary computation shows that
\begin{align*}
	1+ \frac{zf^{\prime\prime}(z)}{f^{\prime}(z)}=1 +2a_2 z &+(6a_3 -4a^2_{2})z^2 +(12a_4 -18a_2 a_3 +8a^3_{2})z^3 \\&+ (20a_5 -32a_2 a_4 -18a^2_{3} +48a_3 a^2_{2}-16a^4_{2})z^4 +\cdots.
\end{align*}
In addition, we have
\begin{align*}
	\left(\frac{1+\omega(z)}{1-\omega(z)}\right)^\alpha= 1 + 2\alpha c_1 z &+ 2(\alpha^2 c^2_1 + \alpha c_2)z^2 \\&+ \frac{2}{3}(\alpha c^3_1 +2\alpha^3 c^3_1+ 6\alpha^2 c_1 c_2 +3\alpha c_3)z^3 +\cdots.
\end{align*}
Equating the coefficients of corresponding powers of $z$ on both sides of \eqref{eq-3.7}, we get
\begin{align}\label{eq-3.8}
	\begin{cases}
		&a_2=\alpha c_1, \vspace{2mm}\\ &a_3=\dfrac{\alpha}{3} \left(3\alpha c^2_1 +c_2\right), \vspace{2mm}\\ &a_4= \dfrac{\alpha}{18}\left((1+17\alpha^2)c^3_1 +15\alpha c_1 c_2 +3c_3\right).
	\end{cases}
\end{align}
Using \eqref{eq-1.10} and \eqref{eq-3.8}, a simple computation shows that
\begin{align*}
	\gamma_{1}=\frac{\alpha}{2}c_1 \hspace{0.5cm}\mbox{and}\hspace{0.5cm}
	\gamma_{2}= \frac{\alpha}{12}\left(3\alpha c^2_1 +2c_2\right).
\end{align*}
From \eqref{eq-1.15}, a simple computation shows that
\begin{align*}
	T_{2,1}(F_{f}/2)=-\dfrac{\alpha^2}{144}\left(9\alpha^2 c^4_1 +4c^2_2 -36c^2_1 +12\alpha c^2_1 c_2\right).
\end{align*}
Suppose that $|c_1|=x$, and using the triangle inequality together with Lemma \ref{lem-2.1} yields the following
\begin{align*}
	|T_{2,1}(F_{f}/2)|&\leq \dfrac{\alpha^2}{144} \left(9\alpha^2 x^4 +4(1-x^2)^2 +36x^2 +12\alpha x^2(1-x^2)\right)\\&= \dfrac{\alpha^2}{144} \left(4+ 4(7+3\alpha)x^2 +(2-3\alpha)^2 x^4\right) \\&:= \dfrac{\alpha^2}{144} g_7(x).
\end{align*}
The function $g_7(x)$ is increasing with respect to $x$ in $0\leq x \leq 1$. Hence, we obtain the following
\begin{align*}
	|T_{2,1}(F_{f}/2)|&\leq \dfrac{\alpha^2}{144} g_7(1) = \dfrac{\alpha^2(4+\alpha^2)}{16}.
\end{align*}
To confirm the sharpness, we examine the function $h_4\in\mathcal{C}_{\alpha}$ satisfying
\begin{align}\label{Eq-3.11}
	1+\frac{zh_4^{\prime\prime}(z)}{h_4^{\prime}(z)}=\left(\frac{1+iz}{1-iz}\right)^\alpha
\end{align}
for which $a_2=i\alpha$ and $a_3=-\alpha^2$. Therefore, we see that
\begin{align*}
	 |T_{2,1}(F_{f}/2)|= \frac{\alpha^2(4+\alpha^2)}{16}. 
\end{align*}
This completes the proof.
\end{proof}

The next theorem determines the precise bound for $|T_{2,1}(F_{f^{-1}}/2)|$ associated with the logarithmic coefficients of inverse functions of functions in $\mathcal{C}_{\alpha}$.
\begin{thm}\label{th-3.8}
Suppose that $0<\alpha\leq 1$. If $f(z)=z+a_2z^2+a_3z^3+\cdots\in \mathcal{C}_{\alpha}$. Then 
\begin{align*}
	|T_{2,1}(F_{f^{-1}}/2)|\leq\dfrac{\alpha^2(4+\alpha^2)}{16}.
\end{align*}
The inequality is sharp.
\end{thm}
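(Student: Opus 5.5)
The plan is to mirror the proof of Theorem \ref{th-3.7}: express $\Gamma_1,\Gamma_2$ through the Schwarz coefficients $c_1,c_2$, bound the resulting quartic expression with Lemma \ref{lem-2.1}, and check equality on $h_4$.

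First, for $f\in\mathcal{C}_{\alpha}$ we already have the representation \eqref{eq-3.7} and the coefficient formulas \eqref{eq-3.8}, so $a_2=\alpha c_1$ and $a_3=\frac{\alpha}{3}(3\alpha c_1^2+c_2)$. Substituting these into \eqref{eq-1.14} gives
\begin{align*}
	\Gamma_1=-\frac{\alpha}{2}c_1, \qquad \Gamma_2=-\frac12\left(a_3-\frac32 a_2^2\right)=\frac{\alpha}{12}\left(3\alpha c_1^2-2c_2\right).
\end{align*}
By \eqref{eq-1.15}, $T_{2,1}(F_{f^{-1}}/2)=\Gamma_1^2-\Gamma_2^2$, which expands to
\begin{align*}
	T_{2,1}(F_{f^{-1}}/2)=-\frac{\alpha^2}{144}\left(9\alpha^2c_1^4+4c_2^2-36c_1^2-12\alpha c_1^2c_2\right).
\end{align*}
This differs from the expression in Theorem \ref{th-3.7} only in the sign of the mixed term $c_1^2c_2$.

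Next, put $|c_1|=x\in[0,1]$ and use $|c_2|\le 1-x^2$ from Lemma \ref{lem-2.1}. The triangle inequality makes the sign of the mixed term irrelevant, so we obtain exactly the same majorant as before:
\begin{align*}
	|T_{2,1}(F_{f^{-1}}/2)|\le \frac{\alpha^2}{144}\left(4+4(7+3\alpha)x^2+(2-3\alpha)^2x^4\right)=\frac{\alpha^2}{144}g_7(x).
\end{align*}
All coefficients of $g_7$ are nonnegative, so $g_7$ is increasing on $[0,1]$. Hence the maximum is $g_7(1)=36+9\alpha^2$, which yields the bound $\alpha^2(4+\alpha^2)/16$.

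For sharpness, use $h_4$ from \eqref{Eq-3.11}, which corresponds to $\omega(z)=iz$, i.e. $c_1=i$ and $c_2=0$. Then $c_1^2=-1$ and $c_1^4=1$, so the bracket equals $9\alpha^2+36$. This gives $|T_{2,1}(F_{f^{-1}}/2)|=\alpha^2(36+9\alpha^2)/144=\alpha^2(4+\alpha^2)/16$. There is no real obstacle; the only points needing care are the algebra for $\Gamma_2$ and checking that every term is extremal at the same point ($x=1$, $c_2=0$), which holds because the $c_2$-dependent terms vanish there.
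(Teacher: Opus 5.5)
Your proposal is correct and follows the paper's proof essentially verbatim. You use the same expressions $\Gamma_1=-\frac{\alpha}{2}c_1$ and $\Gamma_2=\frac{\alpha}{12}(3\alpha c_1^2-2c_2)$, and the same triangle-inequality majorant via Lemma~\ref{lem-2.1} (the paper's $g_8$, identical to $g_7$) maximized at $x=1$. Sharpness is likewise established through $h_4$ with $c_1=i$, $c_2=0$.
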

\begin{proof}
With reference to \eqref{eq-1.14} and \eqref{eq-3.8}, we see that
\begin{align*}
	\Gamma_1=-\frac{\alpha}{2}c_1 \hspace{0.5cm}\mbox{and}\hspace{0.5cm}
	\Gamma_2= \frac{\alpha}{12}\left(3\alpha c^2_1 -2c_2\right).
\end{align*}
From \eqref{eq-1.15}, we see that
\begin{align*}
	T_{2,1}(F_{f^{-1}}/2)=\dfrac{\alpha^2}{144}\left(-9\alpha^2 c^4_1 -4c^2_2 +36c^2_1 +12\alpha c^2_1 c_2\right).
\end{align*}
Set $|c_1|=x$. By applying the triangle inequality and Lemma \ref{lem-2.1}, it follows that 
\begin{align*}
	|T_{2,1}(F_{f^{-1}}/2)|&\leq \dfrac{\alpha^2}{144} \left(9\alpha^2 x^4 +4(1-x^2)^2 +36x^2 +12\alpha x^2(1-x^2)\right)\\&= \dfrac{\alpha^2}{144} \left(4+ 4(7+3\alpha)x^2 +(2-3\alpha)^2 x^4\right) \\&:= \dfrac{\alpha^2}{144} g_8(x).
\end{align*}
The function $g_8(x)$ is increasing with respect to $x$ in $0\leq x \leq 1$. Hence, we obtain the desired inequality
\begin{align*}
	|T_{2,1}(F_{f^{-1}}/2)|&\leq \dfrac{\alpha^2}{144} g_8(1) = \dfrac{\alpha^2(4+\alpha^2)}{16}.
\end{align*}
To illustrate the sharpness, we consider the function $h_4\in\mathcal{C}_{\alpha}$ satisfying the condition \eqref{Eq-3.11} and obtain the following
\begin{align*}
   |T_{2,1}(F_{f^{-1}}/2)|=\frac{\alpha^2(4+\alpha^2)}{16}.
\end{align*}
This completes the proof.
\end{proof}

We now present a result that provides the sharp bound for $|T_{2,1}(F_{f}/2)|$ for functions in the class $\mathcal{R}(\alpha)$.
\begin{thm}\label{th-3.9}
Assume that $0\leq\alpha <1$. If $ f(z)=z+a_2z^2+a_3z^3+\cdots\in \mathcal{R}(\alpha) $. Then 
\begin{align*}
	|T_{2,1}(F_{f}/2)|\leq\frac{(1-\alpha)^2}{144}(37 + 6\alpha +9\alpha^2).
\end{align*}
The inequality is sharp.
\end{thm}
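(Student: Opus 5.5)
The plan is to follow the same pattern as in Theorems \ref{th-3.5}--\ref{th-3.8}: express $\gamma_1,\gamma_2$ through the coefficients of a Schwarz function, write $T_{2,1}(F_f/2)=\gamma_1^2-\gamma_2^2$ explicitly, and bound it using Lemma \ref{lem-2.1}.

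\textbf{Coefficients.} Since $f\in\mathcal{R}(\alpha)$, by \eqref{eq-1.8} we can write
\begin{align*}
f^{\prime}(z)=(1-\alpha)\frac{1+\omega(z)}{1-\omega(z)}+\alpha
\end{align*}
for some $\omega\in\mathcal{B}_0$ of the form \eqref{eq-2.1}. The right-hand side expands as $1+2(1-\alpha)c_1z+2(1-\alpha)(c_1^2+c_2)z^2+\cdots$, while $f^{\prime}(z)=1+2a_2z+3a_3z^2+\cdots$. Comparing coefficients gives
\begin{align*}
a_2=(1-\alpha)c_1, \qquad a_3=\tfrac{2}{3}(1-\alpha)(c_1^2+c_2).
\end{align*}
Substituting into \eqref{eq-1.10} yields
\begin{align*}
\gamma_1=\tfrac{1}{2}(1-\alpha)c_1, \qquad \gamma_2=\tfrac{1-\alpha}{12}\left((1+3\alpha)c_1^2+4c_2\right).
\end{align*}
Hence
\begin{align*}
T_{2,1}(F_f/2)=\gamma_1^2-\gamma_2^2=\frac{(1-\alpha)^2}{144}\left(36c_1^2-\left((1+3\alpha)c_1^2+4c_2\right)^2\right).
\end{align*}

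\textbf{Reduction to one variable.} Put $x=|c_1|\in[0,1]$. By the triangle inequality and Lemma \ref{lem-2.1} ($|c_2|\le 1-x^2$),
\begin{align*}
|T_{2,1}(F_f/2)|\le\frac{(1-\alpha)^2}{144}\left(36x^2+\left((1+3\alpha)x^2+4(1-x^2)\right)^2\right).
\end{align*}
Since $(1+3\alpha)x^2+4(1-x^2)=4-3(1-\alpha)x^2\ge 0$ on $[0,1]$, the bracket expands to
\begin{align*}
g(x)=16+(12+24\alpha)x^2+9(1-\alpha)^2x^4.
\end{align*}
All coefficients of $g$ are nonnegative, so $g$ is increasing on $[0,1]$. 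Therefore
\begin{align*}
|T_{2,1}(F_f/2)|\le\frac{(1-\alpha)^2}{144}\,g(1)=\frac{(1-\alpha)^2}{144}(37+6\alpha+9\alpha^2).
\end{align*}

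\textbf{Sharpness.} Take $\omega(z)=iz$, so $c_1=i$ and $c_2=0$. The extremal function is $h$ with $h^{\prime}(z)=(1-\alpha)\frac{1+iz}{1-iz}+\alpha$, which lies in $\mathcal{R}(\alpha)$. With $c_1^2=-1$ and $c_2=0$,
\begin{align*}
36c_1^2-(1+3\alpha)^2c_1^4=-36-(1+3\alpha)^2=-(37+6\alpha+9\alpha^2),
\end{align*}
so equality is attained.

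\textbf{Main obstacle.} The only delicate point is the coefficient bookkeeping in $\gamma_2$, in particular getting the factor $(1+3\alpha)$ right. One must also check that $4-3(1-\alpha)x^2\ge 0$, so no case split like those in Theorems \ref{th-3.2} and \ref{th-3.4} is needed.
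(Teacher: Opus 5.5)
Your proposal is correct and follows essentially the same route as the paper's proof: the same Schwarz-function representation of $f'$, the same formulas $\gamma_1=\tfrac12(1-\alpha)c_1$ and $\gamma_2=\tfrac{1-\alpha}{12}\bigl((1+3\alpha)c_1^2+4c_2\bigr)$, the bound $|c_2|\le 1-|c_1|^2$ from Lemma \ref{lem-2.1}, and the same increasing polynomial $16+12(1+2\alpha)x^2+9(1-\alpha)^2x^4$. The extremal function $h'(z)=(1-\alpha)\frac{1+iz}{1-iz}+\alpha$ is the paper's $h_5$. No case split is needed because the coefficient $1+3\alpha$ is nonnegative for $\alpha\ge 0$; the sign check you flag on $4-3(1-\alpha)x^2$ is automatic and not the real reason.
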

\begin{proof}
Let $f\in\mathcal{R}(\alpha)$ be of the form $ f(z)=z +\sum_{n=2}^{\infty} a_n z^n $, $z\in \mathbb{D}$. By \eqref{eq-1.8}, we have
\begin{align}\label{eq-3.9}
	f^{\prime}(z)=(1-\alpha)\frac{1+\omega(z)}{1-\omega(z)} +\alpha
\end{align}
for some $\omega\in\mathcal{B}_0$ of the form \eqref{eq-2.1}. An elementary computation shows that
\begin{align*}
	f^{\prime}(z)=1 +2a_2 z + 3a_3z^2 + 4a_4z^3+5a_5z^4 +\cdots.
\end{align*}
We also derive
\begin{align*}
	(1-\alpha)\frac{1+\omega(z)}{1-\omega(z)} +\alpha= 1 + 2(1-\alpha) c_1 z &+ 2(1-\alpha)(c^2_1 + c_2)z^2 \\&+ 2(1-\alpha)(c^3_1 + 2c_1 c_2 +c_3)z^3 +\cdots.
\end{align*}
By comparing the coefficients of powers of $z$ on both sides of \eqref{eq-3.9}, we get
\begin{align}\label{eq-3.10}
	\begin{cases}
		&a_2= (1-\alpha)c_1, \vspace{2mm}\\ &a_3=\dfrac{2(1- \alpha)}{3} \left(c^2_1 +c_2\right), \vspace{2mm}\\ &a_4= \dfrac{(1-\alpha)}{2}\left(c^3_1 +2c_1 c_2 +c_3\right).
	\end{cases}
\end{align}
Using \eqref{eq-1.10} and \eqref{eq-3.10}, we get
\begin{align*}
	\gamma_{1}=\frac{1}{2}(1-\alpha)c_1 \hspace{0.5cm}\mbox{and}\hspace{0.5cm} \gamma_{2}= \frac{(1-\alpha)}{12}\left((3\alpha+1)c^2_1 +4c_2\right).
\end{align*}
From \eqref{eq-1.15}, 
\begin{align*}
	T_{2,1}(F_{f}/2) = -\frac{1}{144}(1-\alpha)^2\left((3\alpha+1)^2c_1^4+8(3\alpha+1)c_1^2c_2+16c_2^2-36c_1^2\right).
\end{align*}
Let $|c_1|=x$. By applying the triangle inequality and Lemma \ref{lem-2.1}, it follows that 
\begin{align*}
	|T_{2,1}(F_{f}/2)|&\leq \frac{1}{144}(1-\alpha)^2 \left((3\alpha+1)^2x^4+8(3\alpha+1)x^2(1-x^2)+16(1-x^2)^2+36x^2\right)\\&= \frac{1}{144}(1-\alpha)^2 \left(16+12(2\alpha+1)x^2+9(\alpha-1)^2x^4\right) \\&:= \dfrac{\alpha^2}{144} g_9(x).
\end{align*}
The function $g_9(x)$ is increasing with respect to $x$ in $0\leq x \leq 1$. Hence, we obtain the sharp desired inequality
\begin{align*}
	|T_{2,1}(F_{f}/2)|&\leq \dfrac{(1-\alpha)^2}{144} g_9(1) \\&= \frac{(1-\alpha)^2}{144}(37 + 6\alpha +9\alpha^2).
\end{align*}
To establish the sharp bound, we consider the function $h_5\in\mathcal{R}(\alpha)$ satisfying
\begin{align}\label{Eq-3.14}
	h_5^{\prime}(z) =(1-\alpha)\frac{1+iz}{1-iz} +\alpha
\end{align}
for which 
\begin{align*}
	a_2=i(1-\alpha)\; \mbox{and}\; a_3=-\frac{2(1-\alpha)}{3}.
\end{align*} As a consequence of these calculations, we arrive at the equality
\begin{align*}
	|T_{2,1}(F_{f}/2)|=\frac{(1-\alpha)^2}{144}(37 + 6\alpha +9\alpha^2).
\end{align*}
This completes the proof.
\end{proof}

From this, we obtain the following corollary for the class $\mathcal{R}$.
\begin{cor}
If $ f(z)=z+a_2z^2+a_3z^3+\cdots\in \mathcal{R} $, then we obtain
\begin{align*}
	|T_{2,1}(F_{f}/2)|\leq\frac{13}{36}.
\end{align*}
The inequality is sharp.
\end{cor}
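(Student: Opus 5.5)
The plan is to obtain the corollary by specializing Theorem \ref{th-3.9} to $\alpha=0$, since $\mathcal{R}(0)=\mathcal{R}$. For $f\in\mathcal{R}$ I write $f'(z)=\frac{1+\omega(z)}{1-\omega(z)}$ with $\omega\in\mathcal{B}_0$. Then \eqref{eq-3.10} gives $a_2=c_1$ and $a_3=\frac{2}{3}(c_1^2+c_2)$. By \eqref{eq-1.10} this yields $\gamma_1=\frac12 c_1$ and $\gamma_2=\frac1{12}(c_1^2+4c_2)$, hence
\begin{align*}
T_{2,1}(F_f/2)=-\frac{1}{144}\left(c_1^4+8c_1^2c_2+16c_2^2-36c_1^2\right).
\end{align*}

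With $x=|c_1|$, the triangle inequality and Lemma \ref{lem-2.1} (namely $|c_2|\le 1-x^2$) give
\begin{align*}
|T_{2,1}(F_f/2)|\le \frac{1}{144}\left(16+12x^2+9x^4\right).
\end{align*}
The right-hand side is increasing on $[0,1]$, so its maximum is $\frac{37}{144}$, attained at $x=1$. Since $\frac{37}{144}<\frac{52}{144}=\frac{13}{36}$, the stated inequality $|T_{2,1}(F_f/2)|\le\frac{13}{36}$ follows immediately.

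For the sharpness assertion I would test the natural extremal function $h_5$ from \eqref{Eq-3.14} with $\alpha=0$, i.e. $h_5'(z)=\frac{1+iz}{1-iz}$. This gives $c_1=i$, $c_2=0$, so $\gamma_1=\frac i2$ and $\gamma_2=-\frac1{12}$. Consequently $|\gamma_1^2-\gamma_2^2|=\frac14+\frac1{144}=\frac{37}{144}$.

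This is the main obstacle. The value $\frac{37}{144}$ obtained from $h_5$ coincides with the upper bound derived above, so the true sharp constant for $\mathcal{R}$ is $\frac{37}{144}$, and no function in $\mathcal{R}$ can attain $\frac{13}{36}$. Consequently, the argument will establish the inequality exactly as stated, but the sharpness claim can only be justified for the constant $\frac{37}{144}$, obtained from Theorem \ref{th-3.9} at $\alpha=0$. The figure $\frac{13}{36}$ appears to be a misprint for $\frac{37}{144}$, and I would record this explicitly rather than claim equality at $\frac{13}{36}$.
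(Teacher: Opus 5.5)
Your proposal is correct and follows the paper's route exactly: the corollary is Theorem~\ref{th-3.9} at $\alpha=0$, whose sharp bound $\frac{(1-\alpha)^2}{144}(37+6\alpha+9\alpha^2)$ reduces to $\frac{37}{144}$, attained by $h_5$ with $c_1=i$, $c_2=0$. Your diagnosis of the constant is also right: $\frac{13}{36}=\frac14+\frac19$ is only the crude estimate $|\gamma_1|^2+|\gamma_2|^2$ with the two terms maximized by different Schwarz functions ($|c_1|=1$ for the first, $c_1=0$, $|c_2|=1$ for the second), so the stated inequality holds but is not sharp, and the sharpness assertion should refer to $\frac{37}{144}$.
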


In the next result, we determine the best possible bound for $|T_{2,1}(F_{f^{-1}}/2)|$ in relation to the logarithmic coefficients of inverse functions of functions in $\mathcal{R}(\alpha)$.
\begin{thm}\label{th-3.10}
Let $0\leq\alpha <1$. If $f(z)=z+a_2z^2+a_3z^3+\cdots\in \mathcal{R}(\alpha)$. Then 
\begin{align*}
	|T_{2,1}(F_{f^{-1}}/2)|\leq\dfrac{(1-\alpha)^2}{144}(61- 90\alpha +81\alpha^2).
\end{align*}
The inequality is sharp.
\end{thm}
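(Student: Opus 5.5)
The plan is to follow the scheme of Theorem~\ref{th-3.9}, now feeding the coefficient formulas into the inverse logarithmic coefficients. First I take the representation \eqref{eq-3.9} and the resulting formulas \eqref{eq-3.10} for $a_2,a_3$ in terms of the coefficients $c_1,c_2$ of a Schwarz function $\omega$. Substituting these into \eqref{eq-1.14} gives
\begin{align*}
	\Gamma_1=-\frac{1-\alpha}{2}c_1, \qquad \Gamma_2=-\frac12\left(a_3-\frac32 a_2^2\right)=\frac{1-\alpha}{12}\left((5-9\alpha)c_1^2-4c_2\right).
\end{align*}
By \eqref{eq-1.15} we have $T_{2,1}(F_{f^{-1}}/2)=\Gamma_1^2-\Gamma_2^2$, so
\begin{align*}
	T_{2,1}(F_{f^{-1}}/2)=\frac{(1-\alpha)^2}{144}\left(36c_1^2-(5-9\alpha)^2c_1^4+8(5-9\alpha)c_1^2c_2-16c_2^2\right).
\end{align*}

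Next I set $|c_1|=x\in[0,1]$ and use the triangle inequality together with $|c_2|\le 1-x^2$ from Lemma~\ref{lem-2.1}. This gives
\begin{align*}
	|T_{2,1}(F_{f^{-1}}/2)|\le\frac{(1-\alpha)^2}{144}\left(36x^2+(5-9\alpha)^2x^4+8|5-9\alpha|x^2(1-x^2)+16(1-x^2)^2\right).
\end{align*}
The absolute value forces a case split at $\alpha=5/9$, in the same way as Theorems~\ref{th-3.2} and~\ref{th-3.4}.
\begin{itemize}
\item For $0\le\alpha\le 5/9$, the bracket becomes $16+(44-72\alpha)x^2+(1-9\alpha)^2x^4$.
\item For $5/9<\alpha<1$, the bracket becomes $16+(72\alpha-36)x^2+(9\alpha-1)^2x^4$.
\end{itemize}
The step that needs care is checking monotonicity in each case. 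Here it is easy: the $x^2$ coefficient is at least $44-40=4>0$ in the first case, and it is positive since $\alpha>1/2$ in the second. The $x^4$ coefficients are perfect squares. Hence both polynomials are increasing on $[0,1]$, and the maximum at $x=1$ equals $36+(5-9\alpha)^2=61-90\alpha+81\alpha^2$ in both cases. This is the claimed bound.

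For sharpness I use the function $h_5$ from \eqref{Eq-3.14}, which corresponds to $\omega(z)=iz$, that is $c_1=i$ and $c_2=0$. Then $c_1^2=-1$, and the expression becomes $\frac{(1-\alpha)^2}{144}\left(-36-(5-9\alpha)^2\right)$. Its modulus is exactly $\frac{(1-\alpha)^2}{144}(61-90\alpha+81\alpha^2)$. This can also be confirmed directly from $a_2=i(1-\alpha)$ and $a_3=-\frac{2(1-\alpha)}{3}$.
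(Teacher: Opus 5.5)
Your proof is correct and is essentially the paper's argument: the same substitution of \eqref{eq-3.10} into \eqref{eq-1.14}, the triangle inequality with $|c_2|\le 1-x^2$, the case split at $\alpha=5/9$, and the extremal function $h_5$ with $c_1=i$, $c_2=0$. Your sign in $\Gamma_2=\frac{1-\alpha}{12}\bigl((5-9\alpha)c_1^2-4c_2\bigr)$ is actually the right one; the paper writes $9\alpha-5$, which is harmless once moduli are taken.

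There is one arithmetic slip of your own. For $5/9<\alpha<1$ the $x^4$ coefficient is $(9\alpha-5)^2-8(9\alpha-5)+16=81(1-\alpha)^2$, not $(9\alpha-1)^2$; with your expression the value at $x=1$ would be $81\alpha^2+54\alpha-19$. The conclusion is unaffected, since your monotonicity argument only uses that this coefficient is a square and you evaluated the endpoint from the unexpanded bracket, giving $61-90\alpha+81\alpha^2$.
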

\begin{proof}
In view of \eqref{eq-1.14} and \eqref{eq-3.10}, we have
\begin{align*}
	\Gamma_1=-\frac{1}{2}(1-\alpha)c_1 \hspace{0.5cm}\mbox{and}\hspace{0.5cm} \Gamma_2= \frac{(1-\alpha)}{12}\left((9\alpha-5)c^2_1 -4c_2\right).
\end{align*}
From \eqref{eq-1.15}, we obtain the following
\begin{align*}
	T_{2,1}(F_{f^{-1}}/2)=\dfrac{(1-\alpha)^2}{144}\left( -(9\alpha -5)^2 c^4_1 -16c^2_2 +36c^2_1 +8(9\alpha- 5)c^2_1 c_2\right).
\end{align*}
Assume that $|c_1|=x$. Applying the triangle inequality and Lemma \ref{lem-2.1}, we obtain the following
\begin{align*}
	|T_{2,1}(F_{f^{-1}}/2)|\leq \dfrac{(1- \alpha)^2}{144} \left((9\alpha- 5)^2 x^4 +16(1-x^2)^2 +36x^2 +8|9\alpha- 5|x^2 (1-x^2)\right).
\end{align*}
To complete the proof, we consider two cases:\\

\noindent{\bf Case I:} For $0\leq\alpha<\frac{5}{9}$, we obtain
\begin{align*}
	|T_{2,1}(F_{f^{-1}}/2)|&\leq \dfrac{(1- \alpha)^2}{144} \left( 16+(44-72\alpha)x^2 +(1-9\alpha)^2 x^4\right)\\&:=\dfrac{(1- \alpha)^2}{144}g_{10}(x).
\end{align*}
\noindent{\bf Case II:} For $\frac{5}{9}\leq\alpha<1$, we obtain
\begin{align*}
	|T_{2,1}(F_{f^{-1}}/2)|&\leq \dfrac{(1- \alpha)^2}{144} \left( 16+36(2\alpha-1)x^2 +81(1-\alpha)^2 x^4\right)\\&:=\dfrac{(1- \alpha)^2}{144}g_{11}(x).
\end{align*}
In both cases functions $g_j(x)$ for $j=10,11$ are increasing with respect to $x$ in $0\leq x \leq 1$. Hence, we obtain the following
\begin{align*}
	|T_{2,1}(F_{f^{-1}}/2)|&\leq \dfrac{(1-\alpha)^2}{144} g_j(1) \\&= \dfrac{(1-\alpha)^2}{144}(61-90\alpha +81\alpha^2).
\end{align*}
To illustrate the sharpness, we consider the function $h_5\in\mathcal{R}(\alpha)$ satisfying the condition \eqref{Eq-3.14} and obtain the following
\begin{align*}
	|T_{2,1}(F_{f^{-1}}/2)|=\frac{(1-\alpha)^2}{144}(61- 90\alpha +81\alpha^2).
\end{align*}
This completes the proof.
\end{proof}

The next result is an immediate corollary of the above for the class $\mathcal{R}$.
\begin{cor}
If $f(z)=z+a_2z^2+a_3z^3+\cdots\in \mathcal{R}$, then we have
\begin{align*}
	|T_{2,1}(F_{f^{-1}}/2)|\leq\frac{61}{144}.
\end{align*}
The inequality is sharp.
\end{cor}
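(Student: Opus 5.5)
The corollary is the special case $\alpha=0$ of Theorem~\ref{th-3.10}, since $\mathcal{R}(0)=\mathcal{R}$. I will specialize both the inequality and the extremal function, and then check the extremal function directly so that sharpness does not rest on the general computation.

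\emph{The bound.} Putting $\alpha=0$ in Theorem~\ref{th-3.10} gives
\begin{align*}
	|T_{2,1}(F_{f^{-1}}/2)|\leq \frac{(1-0)^2}{144}\left(61-0+0\right)=\frac{61}{144}.
\end{align*}
At $\alpha=0$ the proof of Theorem~\ref{th-3.10} is in Case~I ($0\leq \alpha<5/9$). The majorant there becomes $g_{10}(x)=16+44x^2+x^4$, which is increasing on $[0,1]$ and has $g_{10}(1)=61$, consistent with the bound.

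\emph{Sharpness.} I take $h_5$ from \eqref{Eq-3.14} with $\alpha=0$, that is, $h_5'(z)=(1+iz)/(1-iz)$. This corresponds to the Schwarz function $\omega(z)=iz$, so $c_1=i$ and $c_2=0$. From the expressions in the proof of Theorem~\ref{th-3.10},
\begin{align*}
	\Gamma_1=-\frac{i}{2},\qquad \Gamma_2=\frac{1}{12}\left(-5c_1^2-4c_2\right)=\frac{5}{12}.
\end{align*}
Hence
\begin{align*}
	T_{2,1}(F_{h_5^{-1}}/2)=\Gamma_1^2-\Gamma_2^2=-\frac14-\frac{25}{144}=-\frac{61}{144},
\end{align*}
and its modulus equals the bound.

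There is no real obstacle: the only step requiring care is confirming that the sharpness computation in Theorem~\ref{th-3.10} holds at the endpoint $\alpha=0$, which the explicit values of $\Gamma_1$ and $\Gamma_2$ above settle.
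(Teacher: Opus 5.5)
Your proposal is correct and follows the paper, which obtains the corollary by setting $\alpha=0$ in Theorem~\ref{th-3.10} (Case~I, $g_{10}(x)=16+44x^2+x^4$), with $h_5$, i.e.\ $\omega(z)=iz$, as the extremal function. Your value $\Gamma_2=\tfrac{5}{12}$ has the wrong sign, an error inherited from the paper's formula. Computing directly, $a_2=i$ and $a_3=-\tfrac23$ give $\Gamma_2=-\tfrac12\left(a_3-\tfrac32a_2^2\right)=-\tfrac{5}{12}$; in general the $c_1^2$ coefficient in $\Gamma_2$ should be $5-9\alpha$, not $9\alpha-5$. This is harmless: only $\Gamma_2^2$ enters $T_{2,1}=\Gamma_1^2-\Gamma_2^2$, and the upper bound uses only $|9\alpha-5|$, so neither the value $-\tfrac{61}{144}$ nor the bound changes.
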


\noindent{\bf Acknowledgment:} The authors would like to thank the referee(s) for their helpful suggestions and comments for the improvement of the exposition of the paper.\\

\noindent{\bf Author Contributions:} All authors actively worked on the research contained in the paper. All authors reviewed the manuscript.\\

\noindent\textbf{Compliance of Ethical Standards:}\\

\noindent\textbf{Conflict of interest.} The authors declare that there is no conflict  of interest regarding the publication of this paper.\vspace{1.5mm}

\noindent\textbf{Data availability statement.}  Data sharing is not applicable to this article as no datasets were generated or analyzed during the current study.


\begin{thebibliography}{99}	 
	
	\bibitem{Allu-Arora-Shaji-MJM-2023} {\sc V. Allu, V. Arora}, and {\sc A. Shaji}, On the Second Hankel Determinant of Logarithmic Coefficients for Certain Univalent Functions, \textit{Mediterr. J. Math.} \textbf{20} (2023), 81.
	
	\bibitem{Allu-Shaji-BAMS-2024} {\sc V. Allu} and {\sc A. Shaji}, Second Hankel determinant for logarithmic inverse coefficients of convex and starlike functions, \textit{Bull. Aust. Math. Soc.} (2024), 1-12. 
	
	\bibitem{Allu-Shaji-BAMS.-2024} {\sc V. Allu} and {\sc A. Shaji}, The sharp bound of the second Hankel determinant of logarithmic coefficients for starlike and convex functions, \textit{Bull. Aust. Math. Soc.} (2024), 1-9.
	
	\bibitem{Allu-Thomas-2018} {\sc V. Allu} and {\sc D. K. Thomas}, The logarithmic coefficients of univalent functions-an overview, \textit{Current Research in Mathematical and Computer Sciences II Publisher UWM}, Olsztyn (2018), pp. 257–269.
	
	\bibitem{Branges-AM-1985} {\sc L. de Branges}, A proof of the Bieberbach conjecture, \textit{Acta Math.} \textbf{154} (1985), 137-152.
	
	\bibitem{Carlson-AMAF-1940} {\sc F. Carlson}, Sur les coefficients d’une fonction bornée dans le cercle unité, \textit{Ark. Mat. Astr. Fys.} \textbf{27A}(1), 8 (1940).
	
	\bibitem{Duren-1983-NY}{\sc P. T. Duren}, Univalent Functions. \textit{Springer-Verlag}, New York Inc (1983).
	
	\bibitem{Goodman-1983} {\sc A. W. Goodman}, Univalent Functions (Mariner, Tampa, FL, 1983).
	
	\bibitem{Kowalczyk-Lecko-BAMS-2022} {\sc B. Kowalczyk} and {\sc A. Lecko}, Second Hankel determinant of logarithmic coefficients of convex and starlike functions, \textit{Bull. Aust. Math. Soc.} \textbf{105} (2022), no. 3, 458–467.
	
	\bibitem{Kowalczyk-Lecko-BMMS-2022} {\sc B. Kowalczyk} and {\sc A. Lecko}, Second Hankel determinant of logarithmic coefficients of convex and starlike functions of order alpha, \textit{Bull. Malays. Math. Sci. Soc.} \textbf{45} (2022), no. 2, 727–740.
	
	\bibitem{Kowalczyk-Lecko-RACSAM-2023} {\sc B. Kowalczyk} and {\sc A. Lecko}, The second Hankel determinant of the logarithmic coefficients of strongly starlike and strongly convex functions,\textit{ Rev. Real Acad. Cienc. Exactas Fis. Nat. Ser. A-Mat.} \textbf{117}, 91 (2023). 
	
	\bibitem{Mandal-Ahamed-LMJ-2024} {\sc S. Mandal} and {\sc M.B. Ahamed}, Second Hankel determinant of logarithmic coefficients of inverse functions in certain classes of univalent functions, \textit{Lith. Math. J.}  \textbf{64} (2024), 67–79.
	
	\bibitem{Mandal-Ahamed-Zaprawa-MS-2025} {\sc S. Mandal, M. B. Ahamed} and {\sc P. Zaprawa}, Sharp bounds on the logarithmic coefficients of inverse functions for certain classes of univalent functions, \textit{Math. Slovaca} \textbf{75}(5) (2025), 1119-1134.
	
	\bibitem{Man-Roy-Aha-IJS-2024} {\sc S. Mandal, P. P. Roy} and {\sc M. B. Ahamed}, Hankel and Toeplitz determinants of logarithmic coefficients of Inverse functions for certain classes of univalent functions, \textit{Iran. J. Sci.} \textbf{49}, (2024) 243–252.
	
	\bibitem{Milin-1977-ET}{\sc I. M. Milin}, Univalent Functions and Orthonormal Systems (Nauka, Moscow, 1971) (in Russian); English translation, Translations of Mathematical Monographs, 49 (\textit{American Mathematical Society}, Providence, RI, 1977).
	
	\bibitem{Ponnusamy-Sharma-Wirths-RM-2018} {\sc S. Ponnusamy, N. L. Sharma} and {\sc KJ. Wirths}, Logarithmic Coefficients of the Inverse of Univalent Functions, \textit{Results Math} \textbf{73}, 160 (2018).
	
	\bibitem{Prokhorov-Szynal-1981} {\sc D. V. Prokhorov} and {\sc J. Szynal}, Inverse coefficients for ($\alpha$,$\beta$) convex functions, \textit{Ann. Univ. Mariae Curie-Sklodowska Sect. A} \textbf{35} (1981), 125–143.
	
	\bibitem{Sim-Thomas-S-2020} {\sc Y. J. Sim} and {\sc D. K. Thomas}, On the difference of inverse coefficients of univalent functions, \textit{Symmetry} \textbf{12}(12) (2020).
	
	\bibitem{Sumer-Lecko-BMMSS-2023} {\sc S. E. S\"umer, A. Lecko, B. Çekiç} and {\sc B. Seker}, The Second Hankel Determinant of Logarithmic Coefficients for Strongly Ozaki Close-to-Convex Functions, \textit{Bull. Malays. Math. Sci. Soc.} \textbf{46} (2023), 183.
	
	\bibitem{Toeplitz-1907} {\sc O. Toeplitz}, Zur Transformation der Scharen bilinearer Formen von unendlichvielen Ver\"anderlichen. Mathematischphysikalis- che, Klasse, Nachr. der Kgl. Gessellschaft derWissenschaften zu G\"ottingen, (1907) 110–115.
	 
	\bibitem{Ye-Lim-FCM-2016} {\sc K. Ye} and {\sc L. H. Lim}, Every matrix is a product of Toeplitz matrices,\textit{ Found. Comput. Math.} \textbf{16} (2016), no. 3, 577–598.
	
	\bibitem{Zaprawa-BMMSS-2023} {\sc P. Zaprawa}, Inequalities for the Coefficients of Schwarz Functions, \textit{Bull. Malays. Math. Sci. Soc.} 46:144, (2023).
	
	\bibitem{Zaprawa-RM-2024} {\sc P. Zaprawa}, On a Coefficient Inequality for Carathéodory Functions, \textit{Results Math.} (2024) 79:30.
\end{thebibliography}
\end{document}